\documentclass[12pt]{amsart}
\usepackage[utf8]{inputenc}

\usepackage[margin=1.3in]{geometry}

\usepackage{amsmath}
\usepackage{amsfonts}
\usepackage{amssymb}
\usepackage{amsthm}
\usepackage{mathtools}
\usepackage{caption}
\usepackage{subcaption}
\usepackage{bbm}
\usepackage[export]{adjustbox}

\usepackage{stmaryrd}

\usepackage[all]{xy}

\usepackage{tikz-cd}
\usetikzlibrary{matrix}
\usepackage{graphicx} 
\usepackage{float}

\usepackage{epstopdf}

\usepackage{overpic}

\usepackage[linktocpage]{hyperref}
\hypersetup{
    colorlinks=true,
    linkcolor=blue,
    citecolor=blue,      
    urlcolor=blue,
}

\usepackage{color}
\definecolor{note}{rgb}{0,0,1}  

\newtheorem{theorem}{Theorem}
\newtheorem{definition}[theorem]{Definition}
\newtheorem{proposition}[theorem]{Proposition}
\newtheorem{lemma}[theorem]{Lemma}

\newtheorem{corollary}[theorem]{Corollary}

\newtheorem{remark}[theorem]{Remark}

\numberwithin{equation}{section}
\numberwithin{theorem}{section}

\usepackage{enumitem}

\usepackage{todonotes}

\newcommand{\op}{\operatorname}

\newcommand{\R}{\mathbb{R}}

\newcommand{\be}{\begin{enumerate}}
\newcommand{\ee}{\end{enumerate}}

\usepackage[english]{babel}
\usepackage{csquotes}

\usepackage{hyphenat}

\usepackage[backend=biber,style=alphabetic,maxalphanames=4,maxnames=4]{biblatex}

\renewbibmacro{in:}{}

\DeclareDelimFormat[bib,biblist]{nametitledelim}{\addcomma\space}

\DeclareFieldFormat*{title}{\mkbibitalic{#1}\addcomma}
\DeclareFieldFormat*{journaltitle}{#1}
\DeclareFieldFormat*{volume}{\mkbibbold{#1}}
\DeclareFieldFormat{pages}{#1}
\DeclareFieldFormat[misc]{date}{preprint {#1}}
\DeclareFieldFormat{mr}{%
  MR\addcolon\space
  \ifhyperref
    {\href{http://www.ams.org/mathscinet-getitem?mr=MR#1}{\nolinkurl{#1}}}
    {\nolinkurl{#1}}}
    
\AtEveryBibitem{
  \clearfield{url}
  \clearfield{issn}
  \clearfield{isbn}
  \clearfield{eprintclass}
}
\AtEveryBibitem{\ifentrytype{book}{\clearfield{pages}}{}}

\bibliography{trapezoid_flat}

\usepackage{fancyhdr}
\title{Trapezoidality of flat arrangement polynomials}

\author{Yuan Gao}
\address{School of Mathematics, Nanjing University, Nanjing, Jiangsu, 210093, China}
\email{yuangao@nju.edu.cn} \urladdr{}

\author{Tianyu Yuan}
\address{School of Mathematical Sciences, Eastern Institute of Technology, Ningbo, Zhejiang, 315200, China}
\email{tyyuan@eitech.edu.cn} \urladdr{}

\date{\today}

\keywords{flat vector arrangements, barycentric fibers, h-polynomials, Eulerian digraphs, Alexander polynomial}

\subjclass[2020]{Primary 52B40; Secondary 52C40, 05C31, 57K14.}

\begin{document}

\begin{abstract}
    Fox's conjecture predicts that the absolute values of the coefficients of the Alexander polynomial of an alternating link form a trapezoidal sequence. Kálmán, Mészáros, and Postnikov gave a new proof of trapezoidality for special alternating links using a polynomial associated with flat vector arrangements. We prove that the flat arrangement polynomial has trapezoidal coefficients for every full-dimensional flat vector arrangement, answering the corresponding open question. Our proof gives a geometric interpretation of this polynomial in terms of convex polytopes. As an application, we prove that the Murasugi--Stoimenow polynomial of every connected Eulerian digraph has trapezoidal coefficients. 
\end{abstract}

\maketitle

\tableofcontents

\section{Introduction}

Fox's conjecture asserts that the coefficients of the Alexander polynomial $\Delta_L(-t)$ form a trapezoidal sequence for every alternating link $L$ \cite{Fox62}.
The conjecture remains open in general, although Hafner, Mészáros, and Vidinas proved the stronger conclusion of log-concavity for special alternating links \cite[Theorem~1.2]{HMV24}.

For alternating links, coefficient questions of this kind have long admitted graph-theoretic models. Crowell expressed the Alexander polynomial as a weighted sum over arborescences of a directed graph associated with an alternating diagram \cite{Crowell59}. In the special alternating setting, Murasugi and Stoimenow extended this graph-enumerative picture to the spanning-tree polynomial $P_D(t)$ of an Eulerian digraph; for alternating dimaps, $P_D(t)$ recovers the Alexander polynomial up to the standard sign and monomial normalization \cite{MS03}. Hafner, Mészáros, and Vidinas subsequently placed this Eulerian-digraph polynomial in a broader combinatorial and discrete-geometric framework \cite{HMVGeneralized}.

A parallel line of work gives polyhedral models for the same circle of graph enumerators.
Kálmán and Murakami identified the top of the HOMFLYPT polynomial of a special alternating link with the $h$-vector of a triangulation of a root polytope and with a parking-function enumerator \cite{KM17}. Kálmán and Postnikov related the interior polynomial of a hypergraph to the Ehrhart polynomial of the associated root polytope, or equivalently to the $h$-vectors or its triangulations \cite{KP17}. Activity interpretations of Ehrhart $h^*$-polynomials for graph and oriented-matroid root polytopes were developed further in \cite{KT23,Tothmeresz24}. In the Eulerian-digraph setting, Hafner, Mészáros, and Vidinas expressed $P_D(t)$ through volumes of root polytopes of oriented co-Eulerian matroids \cite[Theorem~1.7]{HMVGeneralized}.

Within this circle of ideas, Kálmán, Mészáros, and Postnikov attribute to Li and Postnikov the determinant-weighted external semi-activity polynomial $f_A(t)$ of a flat vector arrangement, together with its independence of an auxiliary generic vector; see \cite{LP13} and \cite[Definition~3.3 and Theorem~3.4]{KMP}.
They subsequently studied $f_A(t)$ in the Alexander-polynomial setting and asked whether its coefficient sequence is trapezoidal for every flat arrangement \cite[p.~2]{KMP}. We answer this question for arbitrary flat arrangements by realizing external semi-activity on a basis as an edge-orientation statistic on a family of convex polytopes.

The geometric mechanism comes from point fibers of the projection of a standard simplex.
Denote $V=\R^d$. Let $A=(a_1,\dots,a_N)$ be a $d$-by-$N$ matrix, where $a_1,\dots,a_N$ are column vectors in $V$. We call this vector arrangement \emph{full-dimensional} if its columns span $V$, equivalently if $\op{rank}A=d$, and assume this throughout. Suppose a linear functional $\varphi\in V^*$ satisfies $\varphi(a_i)=1$ for each $i$. All vectors $a_i$ then lie in the affine hyperplane $H=\varphi^{-1}(1)$. If $x\in H$ and $A\lambda=x$ with $\lambda\in \R^N_{\geq0}$, applying $\varphi$ gives $\sum_i\lambda_i=1$. Consequently,
\begin{equation*}
  Q_x=\{\lambda\in \R^N_{\geq0}\,\,|\,\,A\lambda=x\}
\end{equation*}
is exactly the fiber over $x$ of the projection
\begin{equation*}
  \pi\colon \Delta^{N-1} \longrightarrow C\coloneqq \op{conv}(\{a_1,\dots,a_N\}),\quad \lambda\mapsto A\lambda.
\end{equation*}
For $x \in C$, we call
\[
  Q_x=\pi^{-1}(x)
\]
the \emph{barycentric fiber of $A$ over $x$}, since its points are
precisely the labeled nonnegative barycentric representations of $x$
by the columns of $A$.
In Section~\ref{sec-barycentric-fibers}, we specify a full-measure
subset
\[
  C_{\mathrm{generic}}\subset C
\]
and call $Q_x$ a \emph{generic barycentric fiber} when
$x\in C_{\mathrm{generic}}$.
These barycentric fibers are classical point fibers of a polytope projection. Billera and Sturmfels package a family of fibers into a single fiber polytope by Minkowski integration \cite{BS92}; our construction instead integrates the ordinary $h$-polynomials of the individual generic fibers.

For $x\in C_{\mathrm{generic}}$, the barycentric fiber $Q_x$ is a simple polytope of dimension $N-d$. The vertices of $Q_x$ are supported on the labeled bases $B\subset \{1,\dots,N\}$ for which the basis simplex
\begin{equation*}
  C_B=\op{conv}(\{a_b\,\,|\,\,b\in B\})
\end{equation*}
contains $x$. At the vertex supported on $B$, its edge rays correspond to vectors $c^{B,j}\in\R^N$ indexed by $j\notin B$. An additional choice of a generic vector $\rho\in\R^N$ orients the edge rays, and the number of rays along which $\langle\rho,-\rangle$ increases equals the external semi-activity $\op{ext}_\rho(B)$.
This matches the classical formula for a simple polytope $Q$ and a generic linear functional:
\begin{equation*}
  h(Q;t)\coloneqq \sum_i f_i(Q)(t-1)^i = \sum_{v\in \op{Vert}(Q)}t^{w(v)},
\end{equation*}
where $f_i(Q)$ is the number of $i$-faces of $Q$ and $w(v)$ is the number of edge rays from $v$ along which the linear functional increases. 
With the full-measure generic locus $C_{\mathrm{generic}}\subset C$ and the natural measure $\mu$ introduced in Section~\ref{sec-integration}, we obtain the key identity
\begin{equation}
  \label{eq-intro-barycentric-formula}
  f_A(t) = d!\int_{C_{\mathrm{generic}}} h(Q_x;t)\,d\mu(x).
\end{equation}
Thus $f_A(t)$ is a positive integral of the ordinary $h$-polynomials of the generic barycentric fibers. The determinant weights arise from the affine volumes of the basis simplices in the base $C$.

\begin{definition}
  \label{def-trapezoidal}
  Let $a_0,\dots,a_r$ be a strictly positive palindromic sequence. Put $m=\lfloor r/2\rfloor$ and $\delta_i=a_i-a_{i-1}$ for $1\leq i\leq m$.
  We say that the sequence is \emph{ trapezoidal} if
  \begin{equation}
    \delta_i\geq 0,\quad \delta_i=0 \Longrightarrow \delta_j=0\text{  for  } i\leq j\leq m.
  \end{equation}
  For a palindromic sequence, this is equivalent to the usual strictly increasing $\to$ constant $\to$ strictly decreasing formulation.
\end{definition}

The Dehn-Sommerville symmetry and the $M$-sequence part of the $g$-theorem \cite[Theorem~8.35]{Zie} imply that the coefficient sequence of $h(Q_x;t)$ is trapezoidal. In this way, the $g$-theorem governs the
coefficient shape of $h(Q_x;t)$. Positive integration transfers the
resulting symmetry and first-half inequalities to $f_A(t)$, while
the zero-propagation property of $M$-sequences accounts for the
single centered plateau.

\begin{theorem}
  \label{thm-main-intro}
  Let $A$ be a full-dimensional flat vector arrangement. The coefficient sequence of $f_A(t)$ is trapezoidal.
\end{theorem}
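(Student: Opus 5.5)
The plan is to use the integral formula \eqref{eq-intro-barycentric-formula}, $f_A(t)=d!\int_{C_{\mathrm{generic}}}h(Q_x;t)\,d\mu(x)$, and to move each part of trapezoidality from the individual fibers to the integral. Set $r=N-d$. For every $x\in C_{\mathrm{generic}}$ the fiber $Q_x$ is a simple $r$-polytope.

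\textbf{Step 1: coefficients and the fiberwise facts.} Write $h(Q_x;t)=\sum_{i=0}^r h_i(x)t^i$. Then the $i$-th coefficient of $f_A$ is
\[
a_i=d!\int_{C_{\mathrm{generic}}}h_i(x)\,d\mu(x).
\]
The polytope $Q_x$ is simple, so its dual is simplicial. Dehn--Sommerville then gives $h_i(x)=h_{r-i}(x)$, and the $g$-theorem gives $h_0(x)=1$ and that $g_i(x)=h_i(x)-h_{i-1}(x)$ for $1\le i\le m=\lfloor r/2\rfloor$ forms an $M$-sequence. In particular each $g_i(x)\ge0$, and the $M$-sequence property yields zero propagation: if $g_i(x)=0$ then $g_j(x)=0$ for all $j\ge i$.

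\textbf{Step 2: integrate the equalities and inequalities.} I need the functions $h_i$ to be measurable, with a positive total measure so that integration preserves strict positivity. The natural route is that $x\mapsto Q_x$ has constant combinatorial type on each chamber of the common refinement of the basis simplices $C_B$. Hence each $h_i$ is a step function that is constant on finitely many open chambers, and these chambers have full measure.

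With this, integrating gives palindromicity $a_i=a_{r-i}$ and $\delta_i=a_i-a_{i-1}=d!\int g_i\,d\mu\ge0$. Strict positivity follows from $a_i\ge a_0=d!\,\mu(C)>0$, which uses $h_0\equiv1$, full-dimensionality, and the fact that $\mu$ has positive total mass.

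\textbf{Step 3: the plateau.} Suppose $\delta_i=0$. Since each $g_i$ is a nonnegative step function, $g_i(x)=0$ on every chamber of positive measure, which is every chamber. By zero propagation, $g_j(x)=0$ on each chamber for all $j\ge i$, and integrating gives $\delta_j=0$.

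\textbf{Main obstacle.} The substantive work is not the inequalities but the setup behind them. I must confirm that the degree of $f_A$ equals $r=N-d$, i.e.\ that every generic fiber has the same dimension $N-d$ and so the same symmetry centre. I must also make sure that the measure $\mu$ and the identity \eqref{eq-intro-barycentric-formula} are exactly as established in Sections~\ref{sec-barycentric-fibers} and~\ref{sec-integration}.

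Given those results, the only delicate point is that $\mu$ must charge every chamber, meaning it is equivalent to Lebesgue measure on $C$. Without this, a chamber of measure zero could leave the pointwise zero-propagation unjustified. Since $\mu$ comes from affine volume on $H$, this should be immediate.
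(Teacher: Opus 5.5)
Your proposal is correct and follows essentially the same route as the paper: you integrate the fiberwise $h$-polynomials via the barycentric-fiber formula, transfer Dehn--Sommerville symmetry and the nonnegativity of the $g_i$ from the $g$-theorem by positive integration, and obtain the single plateau from zero propagation in $M$-sequences. The only difference is cosmetic: you handle measurability and the plateau step by treating each $h_i$ as a step function on the chambers cut out by the simplices $C_B$, whereas the paper argues directly that $h_i-h_{i-1}=0$ almost everywhere, with measurability coming from $\tilde h_{A,\rho}$ being a finite sum of indicator functions.
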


As an application of Theorem \ref{thm-main-intro}, we discuss trapezoidality of connected Eulerian digraphs.
Murasugi and Stoimenow proposed the trapezoidal coefficient shape for plane even-valence graphs with alternating orientation and asked how far such properties extend beyond the planar setting \cite[Sections~7.1 and~7.4]{MS03}. Log-concavity is known for alternating dimaps, and Hafner, Mészáros, and Vidinas conjectured it for every Eulerian digraph \cite[Theorem~1.5 and Conjecture~1.6]{HMVGeneralized}. 

Murasugi and Stoimenow associate a polynomial $P_D(t)$ to every connected Eulerian digraph $D$ by counting spanning trees according to the number of edges that must be reversed to orient the tree toward a fixed root; the resulting polynomial is independent of the root \cite[Proposition~1]{MS03}. Kálmán, Mészáros, and Postnikov realize $P_D(t)$ as the flat arrangement polynomial of a totally unimodular cographic matrix \cite[Sections~4.2 and~5.1]{KMP}. Hence Theorem \ref{thm-main-intro} implies

\begin{corollary}
  \label{cor-euleriandiagraph-trapezoidal}
  For every connected Eulerian digraph $D$, the coefficient sequence of $P_D(t)$ is trapezoidal.
\end{corollary}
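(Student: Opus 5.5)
The statement to prove is Corollary~\ref{cor-euleriandiagraph-trapezoidal}. The plan is to reduce it directly to Theorem~\ref{thm-main-intro} via the realization of Kálmán, Mészáros, and Postnikov \cite[Sections~4.2 and~5.1]{KMP}.

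First, fix a connected Eulerian digraph $D$ with vertex set $V(D)$ and edge set $E(D)$, $|E(D)|=N$. Following \cite{KMP}, form the totally unimodular cographic matrix $A_D$. Its columns are indexed by the edges of $D$. Its column space is the cut/cycle space dual to the directed incidence structure, so that bases of $A_D$ correspond to complements of spanning trees of $D$.

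I will check two hypotheses of Theorem~\ref{thm-main-intro}.
\begin{itemize}
\item[(i)] \emph{Flatness.} There is a functional $\varphi$ with $\varphi(a_e)=1$ for all $e$. This is exactly where the Eulerian condition enters. The all-ones vector on edges is a circulation precisely because in-degree equals out-degree at every vertex. Pairing with this circulation therefore evaluates to $1$ on every column of the cographic representation.
\item[(ii)] \emph{Full dimension.} Replace the ambient space by the column span, of dimension $d=N-|V(D)|+1$ by connectivity. This only changes $f_A$ by no factor, since determinants are taken in a lattice basis of the span.
\end{itemize}

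Second, I will invoke the identity $f_{A_D}(t)=P_D(t)$, possibly up to a monomial factor $t^k$ or reversal $t^{r}P_D(1/t)$, from \cite{KMP}. The substance of that identity is that unimodularity makes all determinant weights equal to $1$. Under the tree/cotree correspondence, the external semi-activity of the basis $E(D)\setminus T$ with respect to a suitable generic $\rho$ counts the edges of $T$ that must be reversed to orient $T$ toward the root. Independence of $\rho$ (\cite[Theorem~3.4]{KMP}) matches the root-independence of $P_D$ from \cite[Proposition~1]{MS03}.

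Finally, trapezoidality as in Definition~\ref{def-trapezoidal} is stated for palindromic strictly positive sequences. It is invariant under shifting by a monomial and under reversal, so the coefficient sequence of $P_D(t)$ is trapezoidal once that of $f_{A_D}(t)$ is, which holds by Theorem~\ref{thm-main-intro}.

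The main obstacle is bookkeeping in the identification. One must confirm that the normalization conventions in \cite{KMP} (choice of lattice, sign of $\rho$, passing from $E(D)$ to the cographic span) produce exactly $P_D$ and not some rescaling. One must also confirm that no leading or trailing zero coefficients are introduced that would break the "strictly positive" hypothesis. I would handle the latter by noting that the constant term of $P_D$ counts trees already oriented toward the root, and a root arborescence exists since $D$ is Eulerian and connected, hence strongly connected.
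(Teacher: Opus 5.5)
Your proposal is correct and follows essentially the same route as the paper: you realize $P_D$ as $f_{B_{D,T_0}}$ for the totally unimodular cographic matrix of \cite{KMP}, and you apply Theorem~\ref{thm-main-intro}. Your flatness argument, that the all-ones vector is a circulation, is the dual form of the paper's cut-balance computation $\varphi_D(b_a)=q-p=1$, and your extra check of strict positivity is unnecessary because Theorem~\ref{thm-main-intro} already gives positive coefficients in degrees $0,\dots,n-1$. The one omission is the degenerate loopless one-vertex digraph: there $d=0$, so Theorem~\ref{thm-main-intro} does not apply, and you must check $P_D(t)=1$ directly, as the paper does.
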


For alternating dimaps, $P_D(t)$ specializes to the Alexander-polynomial model of special alternating links, while the cographic realization applies uniformly to every connected Eulerian digraph. Thus the same barycentric-fiber formula gives both the all-flat theorem and a trapezoidality theorem for the full Eulerian-digraph family.

The paper is organized as follows.
Section~\ref{sec-flat-arrangements} recalls flat arrangements, fundamental circuits, and external semi-activity.
Section~\ref{sec-barycentric-fibers} studies the geometry of barycentric fibers and proves the basis--vertex and tangent-cone descriptions on the generic locus.
Section~\ref{sec-fiber-h} identifies the ordinary $h$-polynomials
of the generic barycentric fibers with local activity enumerators.
Section~\ref{sec-integration} proves the integral formula and derives trapezoidality from the $g$-theorem.
Section~\ref{sec-eulerian-digraphs} proves the Eulerian-digraph application.

\vskip0.5cm
\noindent \textit{Acknowledgements}. 
The authors thank Yin Tian for introducing this project and helpful discussions.
The authors acknowledge the use of ChatGPT in exploratory discussions during the early development of this work. 
In particular, the discussion suggested applying the classical point-fiber viewpoint for projections of the standard simplex to the flat-arrangement polynomial. This suggestion motivated the barycentric-fiber approach developed here. 

\section{Flat arrangements and external semi-activity}
\label{sec-flat-arrangements}

We review the construction originating in \cite{LP13}, following the exposition in \cite[Section~3]{KMP}. Denote the index set $[N]=\{1,\dots,N\}$. Let $V=\R^d$, $d\geq1$, and
\begin{equation}
   A:\R^N\to V
\end{equation}
be a surjective linear map, i.e., a full-dimensional $d$-by-$N$ matrix. Denote the columns by $a_i=Ae_i$.
We assume that $A$ is \emph{flat}: there is a nonzero $\varphi\in V^*$ such that $\varphi(a_i)=1$ for each $i$. Denote the affine hyperplane $H=\varphi^{-1}(1)$ and the convex hull $C=\op{conv}(\{a_1,\dots,a_N\})$.
Since the columns of $A$ span $V$, their affine span is $H$, so $C$ has dimension $d-1$ in $H$.

A subset $S\subset[N]$ is called a \emph{circuit} of $A$ if the columns
$\{a_i\,\,|\,\,i\in S\}$ are linearly dependent, while the columns indexed by every proper subset of $S$ are linearly independent.
Equivalently, $S$ is minimal by inclusion among the supports of nonzero vectors in $\op{ker}A$.

We say a subset $B\subset[N]$ is a \emph{labeled basis} if $|B|=d$ and $\{a_b\,\,|\,\, b\in B\}$ forms a basis of $V$. Therefore, the convex hull
\begin{equation*}
  C_B\coloneqq \op{conv}(\{a_b,\,b\in B\})
\end{equation*}
is a $(d-1)$-simplex in $H$ and has positive volume in $H$. We write
\begin{equation*}
  \Pi_B\coloneqq \left\{\sum_{b\in B}\lambda_b a_b\,\,\,|\,\, \lambda_b\in[0,1]\right\}
\end{equation*}
for the basis parallelotope.

Given a labeled basis $B$ and $j\notin B$, there is a unique vector $c^{B,j}=(c^{B,j}_1,\dots,c^{B,j}_N)\in \R^N$ such that
\begin{equation*}
  c^{B,j}_j=1,\quad c^{B,j}_B=-A_B^{-1}a_j,\quad c^{B,j}_k=0\text{  for  } k\notin B\sqcup\{j\},
\end{equation*}
where $A_B:\R^B\to V$ is the basis isomorphism. We say $c^{B,j}$ is a \emph{fundamental circuit vector}. A direct calculation gives $c^{B,j}\in \op{ker}A$.
We say a nonzero $c\in\op{ker}A$ is a \emph{circuit vector} if its support is a circuit of $A$.
The support of $c^{B,j}$ is the unique circuit contained in $B\sqcup\{j\}$, so every fundamental circuit vector is a circuit vector.

A vector $\rho\in \R^N$ is called \emph{circuit-generic} if $\langle\rho,c\rangle\neq0$ for every circuit vector $c$. Such vectors exist, since the nongeneric vectors form a finite union of proper hyperplanes in $\mathbb{R}^N$.

Fix a circuit-generic $\rho$. We say a circuit vector $c$ is \emph{oriented} if and only if $\langle\rho,c\rangle>0$. Following \cite[Definition~3.1]{KMP}, an index $j\in [N]\backslash B$ is called \emph{externally semi-active} for $B$ if the $j$-coordinate of the oriented fundamental circuit vector supported in $B\sqcup\{j\}$ is positive.
Let $\op{Ext}_\rho(B)$ be the set of these labels and write $\op{ext}_{\rho}(B)=|\op{Ext}_\rho(B)|$.
The following is then immediate from the definition.

\begin{lemma}
  \label{lemma-sign}
  For each labeled basis $B$ and $j\notin B$, $j\in \op{Ext}_\rho(B)$ if and only if $\langle\rho,c^{B,j}\rangle>0$.
\end{lemma}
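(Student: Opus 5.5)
The argument reduces to a sign comparison between $\rho$ and the fundamental circuit vector $c^{B,j}$, carried out in three steps.

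\emph{Step 1: the fundamental circuit vector determines the whole circuit line.} The support of $c^{B,j}$ is the unique circuit $S\subset B\sqcup\{j\}$. The circuit vectors with support $S$ form a single line minus the origin. Indeed, any $c\in\ker A$ with $\op{supp}(c)\subseteq S$ is determined up to scalar: if $c,c'$ are two such vectors and $c$ is chosen nonzero at some index $k\in S$, then $c'-(c'_k/c_k)c$ lies in $\ker A$ with support a proper subset of $S$. Such a vector must vanish, since the columns indexed by a proper subset of a circuit are independent. Consequently the two circuit vectors supported in $B\sqcup\{j\}$, up to positive scaling, are $\pm c^{B,j}$.

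\emph{Step 2: identify the oriented one.} Since $\rho$ is circuit-generic, $\langle\rho,c^{B,j}\rangle\neq0$. The oriented fundamental circuit vector is therefore $\varepsilon\, c^{B,j}$, where $\varepsilon=\op{sgn}\langle\rho,c^{B,j}\rangle\in\{\pm1\}$. This is the only place genericity is used, and it ensures the orientation is well defined. Positive rescaling affects neither the sign of the $j$-coordinate nor the sign of the pairing, so the choice of representative is irrelevant.

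\emph{Step 3: read off the $j$-coordinate.} We have $c^{B,j}_j=1$, so the $j$-coordinate of the oriented vector is $\varepsilon$. By definition, $j\in\op{Ext}_\rho(B)$ if and only if $\varepsilon>0$, that is, if and only if $\langle\rho,c^{B,j}\rangle>0$.

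The only point needing care is the uniqueness up to scalar in Step 1. It follows from the minimality of circuits together with the fact that $j\in S$: if $j$ were not in $S$, then $S$ would be a dependent subset of $B$, contradicting that $B$ is a basis.
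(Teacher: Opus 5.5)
Your proposal is correct and is the same argument the paper has in mind: the paper gives no separate proof and calls the lemma immediate from the definition. You unpack it correctly. The circuit vectors supported in $B\sqcup\{j\}$ are the nonzero multiples of $c^{B,j}$, and the oriented one is a positive multiple of $\op{sgn}\langle\rho,c^{B,j}\rangle\, c^{B,j}$. Since $c^{B,j}_j=1$, its $j$-coordinate has exactly that sign.

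One small quibble, which does not affect the argument. Uniqueness up to scalar of vectors in $\ker A$ supported on a circuit holds for any circuit and does not use $j\in S$. That fact instead guarantees that the $j$-coordinate is nonzero, which you already get from $c^{B,j}_j=1$.
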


The flat arrangement polynomial is then defined as
\begin{equation}
  f_{A,\rho}(t)\coloneqq \sum_{B}t^{\op{ext}_{\rho}(B)}\op{Vol}\Pi_B,
\end{equation}
where the sum is over all labeled bases $B$ of $A$, and $\op{Vol}$ is the Lebesgue measure determined by the standard volume form on $V$. The invariance theorem of Li and Postnikov \cite{LP13}, stated in \cite[Theorem~3.4]{KMP}, says that this polynomial is independent of $\rho$. Proposition \ref{prop-bary-formula} below gives another proof, and we write the common polynomial as $f_A(t)$.

\section{Barycentric fibers}
\label{sec-barycentric-fibers}

In this section, we study the nonnegative representation polytopes associated with the simplex projection determined by $A$, which we will call \emph{barycentric fibers}. For a generic fiber, its vertices correspond to labeled bases $B\subset [N]$, and the increasing edge rays at the vertex indexed by $B$ correspond to $\op{Ext}_{\rho}(B)$.

For $x\in V$, define
\begin{equation}
  \label{eq:barycentric-fiber}
  Q_x\coloneqq \{\lambda\in \R^N_{\geq0}\,|\, A\lambda=x\}.
\end{equation}
Let
\begin{equation*}
  \Delta^{N-1}\coloneqq \{\lambda\in \R^N_{\geq0}\,|\, \sum_i\lambda_i=1\}
\end{equation*}
be the standard $(N-1)$-simplex.
If $x\in H$ and $\lambda\in Q_x$, then $\sum_i\lambda_i=\sum_i \varphi(a_i)\lambda_i=\varphi(A\lambda)=\varphi(x)=1$, hence $x\in C$ and $Q_x$ is the fiber over $x\in C$ of the projection
\begin{equation}
  \label{eq:simplex-projection}
  \pi: \Delta^{N-1}\to C,\quad \pi(\lambda)=A\lambda.
\end{equation}
We use the following generic locus:
\begin{equation}
  \label{eq:generic-locus}
  C_{\mathrm{generic}}\coloneqq C^{\circ}\backslash \bigcup_{\emptyset\neq S\subset[N],\,|S|<d}\op{conv}(\{a_i,\, i\in S\}),
\end{equation}
where $C^\circ$ denotes the relative interior of $C$ in $H$.

\begin{definition}[Barycentric and generic fibers]
  \label{def:barycentric-fiber}
  For $x\in C$, the polytope $Q_x$ in \eqref{eq:barycentric-fiber} is called the \emph{barycentric fiber of $A$ over $x$}.
  Equivalently,
  \[
    Q_x=\pi^{-1}(x)
  \]
  for the simplex projection \eqref{eq:simplex-projection}.
  Its points are precisely the labeled barycentric representations
  \[
    x=\sum_{i=1}^{N}\lambda_i a_i, \qquad \lambda_i\geq 0, \qquad \sum_{i=1}^{N}\lambda_i=1.
  \]
  If $x\in C_{\mathrm{generic}}$, we call $Q_x$ a \emph{generic barycentric fiber}.
\end{definition}

The adjective \emph{generic} refers to the base point $x$. It does not mean that all generic barycentric fibers have the same combinatorial type. Its role is to ensure that every vertex of $Q_x$ is supported on a labeled basis and that $Q_x$ is simple.

It is easy to check that
\begin{lemma}
  For $x\in H$, $Q_x$ is nonempty if and only if $x\in C$. 
\end{lemma}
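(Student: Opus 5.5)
The statement has two directions, and I would prove each directly from the computation $\varphi(A\lambda)=\sum_i\lambda_i$, which was already observed before Definition~\ref{def:barycentric-fiber}.

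For the forward direction, suppose $x\in H$ and $Q_x\neq\emptyset$, and pick $\lambda\in Q_x$. Applying $\varphi$ to $A\lambda=x$ and using $\varphi(a_i)=1$ gives
\[
\sum_i\lambda_i=\varphi(A\lambda)=\varphi(x)=1 .
\]
Together with $\lambda\geq0$, this says $x=\sum_i\lambda_i a_i$ is a convex combination of the columns $a_1,\dots,a_N$. Hence $x\in C$.

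For the converse, suppose $x\in C=\op{conv}(\{a_1,\dots,a_N\})$. By definition of the convex hull there exist $\lambda_i\geq0$ with $\sum_i\lambda_i=1$ and $\sum_i\lambda_i a_i=x$. Then $\lambda\in\R^N_{\geq0}$ and $A\lambda=x$, so $\lambda\in Q_x$ and $Q_x$ is nonempty. Note also that $C\subset H$ automatically, because $H$ is an affine hyperplane containing every $a_i$ and is therefore convex.

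Both directions are one-line verifications, so I expect no real obstacle. The only point requiring care is the forward direction: the normalization $\sum_i\lambda_i=1$ is not part of the definition of $Q_x$ and must be derived from the flatness functional $\varphi$ together with the hypothesis $x\in H$.
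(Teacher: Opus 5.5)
Your proposal is correct and is essentially the paper's argument: the paper states this lemma as ``easy to check'', and its forward direction is exactly the computation $\sum_i\lambda_i=\varphi(A\lambda)=\varphi(x)=1$ given just before Definition~\ref{def:barycentric-fiber}. The converse is, as you say, immediate from the definition of the convex hull.
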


\begin{lemma}
  \label{lemma-polytope}
  For $x\in C$, $Q_x$ is a compact convex polytope. If $x\in C^\circ$, then
  \begin{equation*}
    \op{dim}Q_x=N-d.
  \end{equation*}
\end{lemma}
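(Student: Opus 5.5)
I will prove the two assertions separately, starting with compactness. $Q_x$ is the intersection of the closed orthant $\R^N_{\geq0}$ with the affine subspace $A^{-1}(x)$, so it is a polyhedron. For $x\in C\subset H$, the computation preceding the lemma gives $\sum_i\lambda_i=\varphi(A\lambda)=\varphi(x)=1$ for every $\lambda\in Q_x$. Hence $Q_x\subset\Delta^{N-1}$ is bounded, and therefore a compact convex polytope. It is nonempty by the previous lemma.

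For the dimension, the upper bound is immediate. Since $A$ is surjective, $A^{-1}(x)$ is an affine subspace of dimension $N-d$, so $\op{dim}Q_x\leq N-d$. For equality it suffices to find a point $\lambda\in Q_x$ with all coordinates strictly positive. Such a point lies in the interior of the orthant, so a neighborhood of $\lambda$ in $A^{-1}(x)$ remains in $Q_x$, and then $Q_x$ has full dimension $N-d$ in $A^{-1}(x)$.

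The main step is producing this strictly positive representation when $x\in C^\circ$. The plan is to perturb toward the barycenter.
\begin{enumerate}
\item Let $\beta=\frac1N\sum_i a_i$, which has the strictly positive representation $(1/N,\dots,1/N)$.
\item Since $x$ lies in the relative interior of $C$ within $H$, and $C$ has full dimension $d-1$ in $H$ (the $a_i$ affinely span $H$), the point $y=x+\epsilon(x-\beta)$ lies in $C$ for small $\epsilon>0$. Here $x-\beta$ is parallel to $H$, since both $x$ and $\beta$ lie in $H$.
\item Pick any $\mu\in Q_y$, and set $\lambda=\frac{1}{1+\epsilon}\mu+\frac{\epsilon}{1+\epsilon}(1/N,\dots,1/N)$.
\item Then $A\lambda=\frac{1}{1+\epsilon}y+\frac{\epsilon}{1+\epsilon}\beta=x$, and every coordinate of $\lambda$ is at least $\frac{\epsilon}{(1+\epsilon)N}>0$.
\end{enumerate}
The only point requiring care is that "relative interior in $H$" applies because the affine hull of $C$ equals $H$, which holds by full-dimensionality and flatness. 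This was already noted in Section~\ref{sec-flat-arrangements}.
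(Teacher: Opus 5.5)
Your proof is correct and follows the paper's argument. Compactness comes from $Q_x\subset\Delta^{N-1}$, and $\op{dim}Q_x=N-d$ comes from finding a strictly positive point of $Q_x$ and then a neighborhood argument in $A^{-1}(x)$. The only difference is minor. You get the positive point from a single push of $x$ away from the barycenter $\beta$, whereas the paper pushes $x$ away from each $a_i$ separately and averages the $N$ resulting representations.
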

\begin{proof}
  Since $x\in C\subset H$, every $\lambda\in Q_x$ satisfies
  \begin{equation*}
    \sum_i\lambda_i=\varphi(A\lambda)=\varphi(x)=1.
  \end{equation*}
  Thus $Q_x$ is a closed subset of the standard simplex $\Delta^{N-1}$, and hence is compact. It is a convex polytope since it is defined by the affine equations $A\lambda=x$ and the finitely many linear inequalities $\lambda_i\geq0$.

  Now suppose that $x\in C^\circ$. To determine the dimension of $Q_x$, we construct a point $\lambda^0\in Q_x\cap \R^N_{>0}$. For each $i\in[N]$, since $x$ lies in the relative interior of $C$ in $H$, we may choose $\epsilon_i>0$ sufficiently small such that
  \begin{equation*}
    y_i\coloneqq\frac{x-\epsilon_i a_i}{1-\epsilon_i}\in C,
  \end{equation*}
  hence $x=\epsilon_i a_i+(1-\epsilon_i) y_i$. Expressing each $y_i$ as a convex combination of the $a_j$ and averaging these $N$ expressions for $x$, we obtain a point $\lambda^0\in Q_x$ such that $\lambda^0_i>0$ for every $i$.

  The affine space $A^{-1}(x)$ has dimension $N-d$, since $A$ is surjective. Because all coordinates of $\lambda^0$ are positive, a sufficiently small neighborhood of $\lambda^0$ in $A^{-1}(x)$ is contained in $\R^N_{\geq0}$. Therefore, $Q_x$ has nonempty relative interior in $A^{-1}(x)$, and consequently
  \begin{equation*}
    \op{dim}Q_x= \op{dim}A^{-1}(x)=N-d.
  \end{equation*}
\end{proof}

For $\lambda\in \R^N_{\geq0}$, we write
\begin{equation*}
  \op{supp}_+(\lambda)=\{i\,\,|\,\,\lambda_i>0\}.
\end{equation*}

\begin{lemma}
  \label{lemma-vertex-indep}
  A point $\lambda\in Q_x$ is a vertex of $Q_x$ if and only if the vectors $\{a_i\,|\,i\in \op{supp}_+(\lambda)\}$ are linearly independent.
\end{lemma}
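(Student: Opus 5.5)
This is the standard characterization of vertices of a polyhedron in equality form $\{\lambda\geq0,\ A\lambda=x\}$. I will prove both directions directly, using that $\lambda$ is a vertex exactly when it is not the midpoint of a nontrivial segment in $Q_x$. Write $S=\op{supp}_+(\lambda)$ throughout.

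First suppose the columns $\{a_i\mid i\in S\}$ are linearly dependent. Then there is a nonzero $c\in\ker A$ with $\op{supp}(c)\subset S$. Since $\lambda_i>0$ for all $i\in S$, we may choose $\epsilon>0$ with $\epsilon|c_i|<\lambda_i$ for every $i\in S$. For $i\notin S$ we have $c_i=0$, so both $\lambda\pm\epsilon c$ are nonnegative. Both also satisfy $A(\lambda\pm\epsilon c)=x$, so both lie in $Q_x$. Since $c\neq 0$ they are distinct, and their midpoint is $\lambda$. Hence $\lambda$ is not a vertex.

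Conversely, suppose the columns indexed by $S$ are linearly independent, and write $\lambda=\tfrac12(\mu+\nu)$ with $\mu,\nu\in Q_x$. For $i\notin S$ we have $0=\lambda_i=\tfrac12(\mu_i+\nu_i)$ with $\mu_i,\nu_i\geq0$, so $\mu_i=\nu_i=0$. Thus $\mu-\nu$ is supported in $S$. It also lies in $\ker A$, because $A\mu=A\nu=x$. Independence of $\{a_i\mid i\in S\}$ then forces $\mu=\nu=\lambda$. So $\lambda$ is not an interior point of any nontrivial segment in $Q_x$, and since $Q_x$ is a polytope by Lemma~\ref{lemma-polytope}, $\lambda$ is a vertex.

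No step presents a real obstacle. The only care needed is to use a convention for "vertex" that matches extreme points; for a polytope these coincide with the faces of dimension $0$. If one prefers the supporting-hyperplane definition, an alternative in the independent case is the functional $\sum_{i\notin S}\lambda'_i$. It is nonnegative on $Q_x$ and vanishes exactly on $\{\lambda'\in Q_x\mid \op{supp}_+(\lambda')\subset S\}$. That set is the single point $\lambda$, because $A_S$ is injective.
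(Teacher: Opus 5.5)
Your proof is correct and matches the paper's argument: for dependent columns you perturb $\lambda$ by $\pm\epsilon c$ with $c\in\op{ker}A$ supported on $\op{supp}_+(\lambda)$, and for independent columns you show that any decomposition $\lambda=\tfrac12(\mu+\nu)$ forces $\mu,\nu$ to be supported on $\op{supp}_+(\lambda)$ and hence equal. Your supporting-functional remark is a valid alternative, but the paper does not need it.
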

\begin{proof}
  Suppose $\{a_i\,|\,i\in \op{supp}_+(\lambda)\}$ are linearly independent and $\lambda=(\mu+\nu)/2$ for some $\mu,\nu\in Q_x$. If $i\notin \op{supp}_+(\lambda)$, then $0=\lambda_i=(\mu_i+\nu_i)/2$, which implies $\mu_i=\nu_i=0$ since $\mu_i,\nu_i\geq0$.
  We also have $x=A\mu=A\nu$. Therefore, $\mu=\nu$ since $\mu,\nu$ are also supported on $\op{supp}_+(\lambda)$.

  Conversely, if $\{a_i\,|\,i\in \op{supp}_+(\lambda)\}$ are dependent, we can choose a nonzero $v\in\op{ker}A$ supported on $\op{supp}_+(\lambda)$. For sufficiently small $\epsilon>0$, $\lambda+\epsilon v$ and $\lambda-\epsilon v$ both belong to $Q_x$, hence $\lambda$ cannot be a vertex.
\end{proof}

\begin{lemma}
  \label{lemma-vertex-basis}
  For each $x\in C_{\mathrm{generic}}$, there is a bijection between the vertices $\op{Vert}(Q_x)$ and the set of labeled bases $B$ such that $x\in C_B$. Moreover, the vertex corresponding to $B$, denoted by $\lambda^B$, has support exactly $B$ and is given by
  \begin{equation}
  \label{eq-vertex-basis}
    \lambda^B_B=A^{-1}_Bx,\qquad \lambda^B_{[N]\backslash B}=0.
  \end{equation}
\end{lemma}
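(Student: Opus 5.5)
The proof combines Lemma~\ref{lemma-vertex-indep} with the genericity of $x$. The key consequence of genericity is that no point of $Q_x$ has positive support of size less than $d$. Indeed, take $\lambda\in Q_x$ and put $S=\op{supp}_+(\lambda)$. Since $\sum_i\lambda_i=1$, the point $x=\sum_{i\in S}\lambda_i a_i$ is a convex combination of $\{a_i\,|\,i\in S\}$, so $x\in\op{conv}(\{a_i,\,i\in S\})$. Moreover $S\neq\emptyset$, because $x\neq 0$ (as $\varphi(x)=1$). If $|S|<d$, this contradicts $x\in C_{\mathrm{generic}}$ by \eqref{eq:generic-locus}. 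Hence $|S|\geq d$ for every $\lambda\in Q_x$.

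Now let $\lambda$ be a vertex of $Q_x$. By Lemma~\ref{lemma-vertex-indep}, the vectors $\{a_i\,|\,i\in S\}$ are linearly independent, so $|S|\leq d$. Combined with the bound above, $|S|=d$. Therefore $B\coloneqq S$ is a labeled basis, and $x\in C_B$ because $\lambda$ exhibits $x$ as a convex combination of $\{a_b\,|\,b\in B\}$. Since $A_B$ is an isomorphism and $\lambda$ is supported on $B$, we get $\lambda_B=A_B^{-1}x$ and $\lambda_{[N]\backslash B}=0$, which is \eqref{eq-vertex-basis}. This defines the map $\lambda\mapsto\op{supp}_+(\lambda)$ from vertices to labeled bases $B$ with $x\in C_B$.

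For the inverse map, let $B$ be a labeled basis with $x\in C_B$. Then $x=\sum_{b\in B}\mu_b a_b$ with $\mu_b\geq0$ and $\sum_b\mu_b=1$. By uniqueness of coordinates in the basis, $\mu=A_B^{-1}x$. Define $\lambda^B$ by \eqref{eq-vertex-basis}. It is nonnegative and satisfies $A\lambda^B=x$, so $\lambda^B\in Q_x$. Its positive support is contained in $B$, hence linearly independent, so $\lambda^B$ is a vertex by Lemma~\ref{lemma-vertex-indep}. By the previous paragraph its support has size exactly $d$, so it equals $B$; in particular all coordinates $\mu_b$ are strictly positive.

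The two maps are mutually inverse. Going from a vertex to its support and back returns the same vertex, since a point of $Q_x$ supported on $B$ is determined by $A_B^{-1}x$. Going from $B$ to $\lambda^B$ and back returns $B$, since $\op{supp}_+(\lambda^B)=B$. The only step that needs care is the support lower bound $|\op{supp}_+(\lambda)|\geq d$: it is exactly where the removal of the low-dimensional hulls in \eqref{eq:generic-locus} is used, and it guarantees that the support of $\lambda^B$ is all of $B$ rather than a proper subset.
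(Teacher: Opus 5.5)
Your proof is correct and follows essentially the same route as the paper. Lemma~\ref{lemma-vertex-indep} gives $|\op{supp}_+(\lambda)|\leq d$ at a vertex, removing the hulls of fewer than $d$ points gives the reverse bound, and the converse builds $\lambda^B$ from $A_B^{-1}x$. The only difference is ordering: you prove the bound $|\op{supp}_+(\lambda)|\geq d$ once for all points of $Q_x$ and use it to get full support of $\lambda^B$ after showing it is a vertex, whereas the paper gets positivity of $A_B^{-1}x$ directly from genericity before invoking Lemma~\ref{lemma-vertex-indep}.
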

\begin{proof}
  Let $\lambda\in\op{Vert}(Q_x)$. By Lemma \ref{lemma-vertex-indep}, the vectors $\{a_i\,|\,i\in \op{supp}_+(\lambda)\}$ are linearly independent, so $|\op{supp}_+(\lambda)|\leq d$. On the other hand,
  \begin{equation*}
    x=A\lambda=\sum_{i\in \op{supp}_+(\lambda)}\lambda_i a_i,
    \qquad \sum_{i\in \op{supp}_+(\lambda)}\lambda_i=1,
  \end{equation*}
  hence $x$ lies in the convex hull of the $a_i$ with $i\in\op{supp}_+(\lambda)$. The definition of $C_{\mathrm{generic}}$ rules out $|\op{supp}_+(\lambda)|<d$. Therefore $|\op{supp}_+(\lambda)|=d$, so $\op{supp}_+(\lambda)$ is a labeled
  basis and $x\in C_{\op{supp}_+(\lambda)}$.

  Conversely, let $B$ be a labeled basis such that $x\in C_B$. Since $x$ is a convex combination of $\{a_b\,|\,b\in B\}$, the vector
  \begin{equation*}
    \lambda^B_B=A_B^{-1}x,\qquad
    \lambda^B_{[N]\backslash B}=0
  \end{equation*}
  belongs to $Q_x$. In fact, every coordinate of $\lambda^B_B$ is positive: otherwise $x$ would lie in the convex hull of a proper subset of $B$, which has fewer than $d$ elements and contradicts $x\in C_{\mathrm{generic}}$.
  Therefore, $\op{supp}_+(\lambda^B)=B$.
  Since the vectors $\{a_b\,|\,b\in B\}$ are linearly independent, $\lambda^B$ is a vertex of $Q_x$ by Lemma \ref{lemma-vertex-indep}.

  These two directions are inverse to each other since a vertex is determined uniquely by its support $B$ and the equation $A_B\lambda_B=x$. This proves the claimed bijection and \eqref{eq-vertex-basis}.
\end{proof}

For a vertex $v$ of a polytope $Q$, we define the tangent cone by
\begin{equation}
  T_v Q\coloneqq \op{cone}(Q,v)=\{\alpha(y-v)\,\,|\,\, \alpha\geq0,\, y\in Q\}.
\end{equation}

\begin{proposition}
  \label{prop-simple}
  For $x\in C_{\mathrm{generic}}$, let $\lambda^B$ be the vertex corresponding to $B$ and denote $J=[N]\backslash B$. We have
  \begin{equation}
    T_{\lambda^B}Q_x=\{u\in \op{ker}A\,\,|\,\, u_j\geq0 \text{ for every } j\in J\},
  \end{equation}
  where the projection
  \begin{equation*}
    \pi_J: \op{ker}A\to \R^J,\quad u\mapsto u_J
  \end{equation*}
  is an isomorphism.
  Under the coordinate identification $\R^N=\R^B\oplus\R^J$, the inverse is given by
  \begin{equation}
    \label{eq-inverse}
    \pi^{-1}_J: \R^J\to \op{ker}A,\quad z\mapsto (-A_B^{-1}A_J z,z).
  \end{equation}
  Therefore, $T_{\lambda^B}Q_x\simeq \R^J_{\geq0}$ and $Q_x$ is a simple $(N-d)$-polytope.
\end{proposition}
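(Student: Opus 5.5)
We will prove the statement by computing the tangent cone of the polyhedron $Q_x$ at $\lambda^B$ directly from its defining description, using the fact that the only constraints active at $\lambda^B$ are $\lambda_j\geq 0$ for $j\in J$.

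We first handle $\pi_J$. Since $B$ is a labeled basis, $A_B$ is invertible. For $u\in\op{ker}A$ we have $A_Bu_B+A_Ju_J=0$, so $u_B=-A_B^{-1}A_Ju_J$; hence $u$ is determined by $u_J$ and $\pi_J$ is injective. Conversely, for any $z\in\R^J$ the vector $(-A_B^{-1}A_Jz,z)$ lies in $\op{ker}A$. This shows that $\pi_J$ is an isomorphism with inverse \eqref{eq-inverse}, and also gives $\dim\op{ker}A=|J|=N-d$. We note that $\pi_J^{-1}(e_j)=c^{B,j}$.

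Next we prove the tangent cone identity. For the inclusion ``$\subseteq$'', take $y\in Q_x$ and $\alpha\geq0$. Then $A(y-\lambda^B)=x-x=0$, and for $j\in J$ we have $(y-\lambda^B)_j=y_j\geq0$ because $\lambda^B_j=0$ by Lemma~\ref{lemma-vertex-basis}. For the inclusion ``$\supseteq$'', take $u\in\op{ker}A$ with $u_J\geq0$. Lemma~\ref{lemma-vertex-basis} gives $\lambda^B_b>0$ for every $b\in B$, so we can choose $\epsilon>0$ small enough that $\lambda^B_B+\epsilon u_B>0$. Then $y=\lambda^B+\epsilon u$ is nonnegative and satisfies $Ay=x$, so $y\in Q_x$ and $u=\epsilon^{-1}(y-\lambda^B)$ lies in the cone. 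This step is the only place where genericity of $x$ enters, through the strict positivity of the coordinates of $\lambda^B$ on $B$. It is the main point to get right: if $\lambda^B_b=0$ for some $b\in B$, the cone would acquire the extra facet $u_b\geq0$.

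Finally, restricting $\pi_J$ to the tangent cone identifies $T_{\lambda^B}Q_x$ with the orthant $\R^J_{\geq0}$. The orthant has exactly $N-d$ extreme rays, and these correspond under $\pi_J^{-1}$ to the rays spanned by $c^{B,j}$. The edges of $Q_x$ at $\lambda^B$ correspond to the extreme rays of $T_{\lambda^B}Q_x$, and Lemma~\ref{lemma-polytope} gives $\dim Q_x=N-d$. Hence every vertex of $Q_x$, all of which are of the form $\lambda^B$ by Lemma~\ref{lemma-vertex-basis}, lies on exactly $N-d$ edges, and $Q_x$ is a simple $(N-d)$-polytope.
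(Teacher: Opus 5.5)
Your proof is correct and follows essentially the same route as the paper. It uses the same two-inclusion argument for the tangent cone, relying on $\lambda^B_B>0$ from Lemma~\ref{lemma-vertex-basis}, and the same identification of $\pi_J$ as an isomorphism; you solve $u_B=-A_B^{-1}A_Ju_J$ directly, whereas the paper argues via $\ker\pi_J$. The conclusion is also the same: $N-d$ edge rays at each vertex, combined with Lemma~\ref{lemma-polytope}.

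One small quibble: genericity is not used \emph{only} in the strict-positivity step. Your final paragraph also needs it through Lemma~\ref{lemma-vertex-basis}, which guarantees every vertex is some $\lambda^B$, and through Lemma~\ref{lemma-polytope}, which needs $x\in C^\circ$.
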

\begin{proof}
  If $u\in T_{\lambda^B} Q_x$, we can write $u=\alpha(y-\lambda^B)$ with $\alpha\geq0$ and $y\in Q_x$. Then $Au=0$ and $u_j=\alpha y_j\geq0$ for each $j\in J$ since $\lambda^B_j=0$.

  Conversely, suppose $Au=0$ and $u_j\geq0$ for each $j\in J$. Then $\lambda^B_b>0$ for each $b\in B$ by Lemma \ref{lemma-vertex-basis}, so for sufficiently small $\epsilon>0$ one has $\lambda^B_j+\epsilon u_j\geq 0$ for all $j\in [N]$.
  Since $A(\lambda^B+\epsilon u)=x$, we have $\lambda^B+\epsilon u\in Q_x$. This implies $u\in T_{\lambda^B}Q_x$.

  The map $\pi_J$ is injective: if $u\in\op{ker}\pi_J\subset \op{ker}A$, then $u$ is supported on $B$ and must vanish.
  The formula \eqref{eq-inverse} of $\pi_J^{-1}$ is a right-inverse of $\pi_J$ by definition. Therefore, $\pi_J$ is an isomorphism.
  As a result, $T_{\lambda^B}Q_x$ has exactly $N-d$ edge rays. Together with Lemma \ref{lemma-polytope}, we see $Q_x$ is a simple $(N-d)$-polytope.
\end{proof}

\begin{corollary}
  \label{cor-ext}
  Under the hypotheses and notation of Proposition \ref{prop-simple}, the edge ray of $Q_x$ indexed by $j\in J$ is generated by $c^{B,j}$. The functional $\lambda\mapsto\langle\rho,\lambda\rangle$ increases along this ray exactly when $j\in \op{Ext}_\rho(B)$.
\end{corollary}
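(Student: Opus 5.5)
The plan is to read both claims directly off Proposition \ref{prop-simple}. That proposition identifies the tangent cone $T_{\lambda^B}Q_x$ with the simplicial cone $\R^J_{\geq0}$ via the isomorphism $\pi_J^{-1}$ of \eqref{eq-inverse}.

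\textbf{Step 1: the edge rays.} The extreme rays of $\R^J_{\geq0}$ are spanned by the standard basis vectors $e_j$, $j\in J$. Since $\pi_J^{-1}$ is a linear isomorphism carrying $\R^J_{\geq0}$ onto $T_{\lambda^B}Q_x$, the extreme rays of $T_{\lambda^B}Q_x$ are spanned by $\pi_J^{-1}(e_j)$. These extreme rays are exactly the edge directions of the simple polytope $Q_x$ at $\lambda^B$.

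\textbf{Step 2: identify the generator.} Compute $\pi_J^{-1}(e_j)=(-A_B^{-1}A_Je_j,e_j)=(-A_B^{-1}a_j,e_j)$. Compare this with the definition of $c^{B,j}$: its $j$-coordinate is $1$, its $B$-block is $-A_B^{-1}a_j$, and all other coordinates vanish. Hence $\pi_J^{-1}(e_j)=c^{B,j}$, so the edge ray indexed by $j$ is $\R_{\geq0}c^{B,j}$.

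\textbf{Step 3: the increasing condition.} Along this ray the functional changes as $\langle\rho,\lambda^B+s c^{B,j}\rangle=\langle\rho,\lambda^B\rangle+s\langle\rho,c^{B,j}\rangle$ for $s\geq0$. It therefore increases exactly when $\langle\rho,c^{B,j}\rangle>0$. By Lemma \ref{lemma-sign}, this holds if and only if $j\in\op{Ext}_\rho(B)$. Circuit-genericity of $\rho$ guarantees $\langle\rho,c^{B,j}\rangle\neq0$, since $c^{B,j}$ is a circuit vector, so no edge is level.

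The only point needing care is Step 1: one must justify that the edges of a polytope at a vertex correspond to the extreme rays of its tangent cone. This is standard, since the tangent cone at $v$ is generated by the edge directions at $v$ and the faces of the tangent cone correspond to the faces of $Q$ containing $v$. With that in hand, everything else is direct computation.
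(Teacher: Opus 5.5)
Your proposal is correct and matches the paper's proof: compute $\pi_J^{-1}(e_j)=c^{B,j}$ from \eqref{eq-inverse}, then invoke Lemma \ref{lemma-sign} for the increasing condition. Your extra remarks make explicit two points the paper leaves implicit: that edges at $\lambda^B$ are the extreme rays of the tangent cone, and that circuit-genericity rules out level edges.
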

\begin{proof}
  By \eqref{eq-inverse}, for the basis vector $e_j\in \R^J$, we have
  \begin{equation*}
    (\pi_J^{-1}(e_j))_j=1,\quad (\pi_J^{-1}(e_j))_B=-A_B^{-1}a_j,\quad (\pi_J^{-1}(e_j))_k=0\text{  for  }k\notin B\sqcup\{j\}.
  \end{equation*}
  Thus $\pi_J^{-1}(e_j)=c^{B,j}$. The remaining statement follows from Lemma \ref{lemma-sign}.
\end{proof}

\section{The fiberwise \texorpdfstring{$h$}{h}-polynomial}
\label{sec-fiber-h}

We review the construction and basic properties of the $h$-polynomial. We refer the reader to \cite[Chapter~8]{Zie} for more details.

For a simple $r$-polytope $Q$, the $h$-polynomial is defined as
\begin{equation}
  h(Q;t)\coloneqq \sum_{\emptyset\neq F\leq Q}(t-1)^{\op{dim}F},
\end{equation}
where $F\leq Q$ means $F$ is a face of $Q$. Note that the $h$-polynomial of a point equals $1$. 

\begin{lemma}
  \label{lemma-minimizer}
  Let $Q$ be a polytope and $\ell$ be a linear functional on its ambient space that is nonconstant on every edge of $Q$. Then every nonempty face of $Q$ has a unique vertex minimizing $\ell$.
\end{lemma}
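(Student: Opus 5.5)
Existence and uniqueness have to be handled separately, and both come from the linear programming structure of a polytope together with the fact that faces are themselves polytopes. Let $F$ be a nonempty face of $Q$. Since $F$ is a nonempty compact polytope, the linear functional $\ell$ attains its minimum on $F$. The set of minimizers
\[
  F_{\min}=\{y\in F\,\,|\,\,\ell(y)=\min_F\ell\}
\]
is a nonempty face of $F$. It is therefore also a face of $Q$, because faces of faces are faces. Every nonempty face of a polytope contains a vertex, so $F_{\min}$ contains a vertex of $F$, and hence of $Q$. This settles existence.

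For uniqueness, suppose $F_{\min}$ contained two distinct vertices. Then $F_{\min}$ would be a polytope of dimension at least $1$. Any polytope of positive dimension has at least one edge, for instance a $1$-dimensional face reached by iterating facets. That edge $E\subset F_{\min}$ is a face of $F_{\min}$, hence a face of $Q$, and it is $1$-dimensional, so it is an edge of $Q$. But $\ell$ is constant on $F_{\min}$, hence constant on $E$, contradicting the hypothesis that $\ell$ is nonconstant on every edge of $Q$. Therefore $F_{\min}$ is zero-dimensional, i.e.\ a single vertex, and it is the unique vertex of $F$ minimizing $\ell$.

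The only point needing any care is the standard face-lattice facts: faces of faces are faces, every nonempty polytope has a vertex, and every polytope of dimension at least $1$ has an edge. All of these follow from \cite[Chapter~2]{Zie}. I expect no real obstacle here.

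A remark for later use, though the statement itself does not need it: in a simple polytope, every vertex $v$ together with any subset $S$ of its edge rays spans a unique face. Combining this with the lemma gives the bijection between nonempty faces and pairs $(v,S)$ with $S$ a subset of the increasing rays at $v$. That bijection is what will yield $h(Q;t)=\sum_v t^{w(v)}$.
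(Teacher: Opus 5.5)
Your proof is correct and takes the same route as the paper: the set of $\ell$-minimizers on a face is itself a face of $Q$, and if it had positive dimension it would contain an edge of $Q$ on which $\ell$ is constant. You additionally spell out the existence step and the standard face-lattice facts (faces of faces are faces, every nonempty polytope has a vertex, every positive-dimensional polytope has an edge), which the paper leaves implicit.
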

\begin{proof}
  Since $\ell$ is linear, the set of minimizers on a face is also a face of $Q$. If it has positive dimension, then it contains an edge on which $\ell$ is constant, which leads to a contradiction.
\end{proof}

We now use a different expression of $h(Q;t)$, which is more convenient for the comparison with $f_A(t)$.

\begin{lemma}
  \label{lemma-h}
  Let $Q$ be a simple $r$-polytope and $\ell$ be nonconstant on each edge. Then we have
  \begin{equation}
    h(Q;t)=\sum_{v\in \op{Vert}(Q)}t^{w(v)},
  \end{equation}
  where $w(v)$ is the number of edges along which $\ell$ increases from $v$.
\end{lemma}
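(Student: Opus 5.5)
The plan is the standard shelling-type argument from \cite[Chapter~8]{Zie}: assign each nonempty face of $Q$ to a distinguished vertex, then count faces vertex by vertex.

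\textbf{Step 1: assign faces to vertices.} By Lemma~\ref{lemma-minimizer}, every nonempty face $F\leq Q$ has a unique vertex $v_F$ minimizing $\ell$ on $F$. This gives a map $F\mapsto v_F$ from nonempty faces to vertices. It then suffices to show that, for each vertex $v$, the faces with $v_F=v$ contribute $t^{w(v)}$ to $\sum_F (t-1)^{\op{dim}F}$.

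\textbf{Step 2: faces through $v$ correspond to subsets of edges.} Since $Q$ is simple of dimension $r$, exactly $r$ edges meet at $v$. For every subset $S$ of these edges there is a unique face $F_S$ containing $v$ whose edges at $v$ are exactly $S$, and $\op{dim}F_S=|S|$. Conversely, every face containing $v$ arises this way. This is the standard fact that the face lattice above a vertex of a simple polytope is Boolean; I will cite it from \cite[Chapter~8]{Zie}. One can also argue via the tangent cone, which at a simple vertex is simplicial, so its faces correspond to subsets of its rays.

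\textbf{Step 3: characterize when $v$ is the minimizer.} I claim $v_{F_S}=v$ if and only if $\ell$ increases along every edge in $S$.

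For the forward direction: if $\ell$ decreased along some edge in $S$, the other endpoint of that edge lies in $F_S$ and has smaller $\ell$-value, so $v$ would not be the minimizer.

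For the converse: the tangent cone of $F_S$ at $v$ is generated by the edge directions in $S$, and $F_S\subset v+\op{cone}(F_S,v)$. So $\ell(y)-\ell(v)$ is a nonnegative combination of the increments along $S$. Each of these increments is positive, because $\ell$ is nonconstant on edges. Hence $\ell(y)\geq\ell(v)$ for all $y\in F_S$, with equality only at $y=v$.

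\textbf{Step 4: sum.} The faces assigned to $v$ are exactly the $F_S$ with $S$ ranging over subsets of the $w(v)$ increasing edges. Their contribution is therefore
\[
\sum_{S}(t-1)^{|S|}=\sum_k\binom{w(v)}{k}(t-1)^k=t^{w(v)}.
\]
Summing over all vertices gives the claimed formula for $h(Q;t)$.

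\textbf{Main obstacle.} The only genuinely non-formal input is Step 2, the Boolean structure of faces at a vertex of a simple polytope, together with the fact that $F_S$ is contained in the cone spanned by $S$ at $v$. I intend to quote this from Ziegler rather than reprove it.
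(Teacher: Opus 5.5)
Your proposal is correct and follows essentially the same route as the paper: assign each nonempty face to its unique $\ell$-minimizing vertex, use simplicity to identify faces through $v$ with subsets of the edge rays of the tangent cone $T_vQ$, count $\binom{w(v)}{i}$ faces of dimension $i$ with minimum at $v$, and finish with the binomial theorem. Your Step 3 adds a justification, via $F_S\subset v+\op{cone}(F_S,v)$, of the claim that $v$ minimizes $\ell$ on a face exactly when $\ell$ increases along all of that face's edges at $v$; the paper states this without proof.
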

\begin{proof}
  For each face of $Q$, the minimizer of $\ell$ is exactly a vertex $v$ by Lemma \ref{lemma-minimizer}. 
  Conversely, for a vertex $v$, the faces containing $v$ correspond to faces of $T_vQ$. Since $Q$ is simple, every $i$-face of $T_vQ$ is generated by $i$ edge rays. The corresponding face of $Q$ has minimum at $v$ exactly when $\ell$ increases along all these rays. Therefore, the number of $i$-faces with minimum at $v$ is $\binom{w(v)}{i}$. Counting each $i$-face at its unique minimum gives
  \begin{equation}
    f_i(Q)=\sum_{v\in \op{Vert}(Q)}\binom{w(v)}{i},
  \end{equation}
  where $f_i(Q)$ denotes the number of $i$-faces of $Q$. Therefore,
  \begin{gather*}
    h(Q;t)=\sum_{\emptyset\neq F\leq Q}(t-1)^{\op{dim}F}=\sum_{i=0}^r f_i(Q)(t-1)^i\\ 
    =\sum_{v\in \op{Vert}(Q)}\sum_{i=0}^r \binom{w(v)}{i}(t-1)^i=\sum_{v\in \op{Vert}(Q)} t^{w(v)}.
  \end{gather*}
\end{proof}

We define on $H=\varphi^{-1}(1)$ the function
\begin{equation}
  \tilde{h}_{A,\rho}(x;t)\coloneqq \sum_{B}{1_{C_B}(x)}t^{\op{ext}_\rho(B)},
\end{equation}
where $1_{C_B}(x)$ is the indicator function of $C_B$.

\begin{proposition}
  \label{prop-fiber-h}
  For each $x\in C_{\mathrm{generic}}$, we have
  \begin{equation}
    h(Q_x;t)=\tilde{h}_{A,\rho}(x;t).
  \end{equation}
\end{proposition}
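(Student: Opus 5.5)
The plan is to apply Lemma \ref{lemma-h} to the simple polytope $Q_x$ with the linear functional $\ell(\lambda)=\langle\rho,\lambda\rangle$. Then we match the resulting vertex sum term by term with $\tilde{h}_{A,\rho}(x;t)$, using the vertex--basis bijection of Lemma \ref{lemma-vertex-basis} and the edge description of Corollary \ref{cor-ext}.

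First, fix $x\in C_{\mathrm{generic}}$. By Proposition \ref{prop-simple}, $Q_x$ is a simple $(N-d)$-polytope. By Lemma \ref{lemma-vertex-basis}, its vertices are the points $\lambda^B$ with $B$ ranging over labeled bases such that $x\in C_B$.

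Second, check that $\ell$ is nonconstant on every edge of $Q_x$, which is the hypothesis of Lemma \ref{lemma-h}. Every edge contains a vertex $\lambda^B$. By Corollary \ref{cor-ext}, the edge direction at that vertex is a positive multiple of some fundamental circuit vector $c^{B,j}$. This is a circuit vector, and $\rho$ is circuit-generic, so $\langle\rho,c^{B,j}\rangle\neq 0$. Hence $\ell$ is nonconstant along the edge.

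Third, compute the exponents. By Proposition \ref{prop-simple} the edges at $\lambda^B$ are exactly the $N-d$ rays generated by $c^{B,j}$ for $j\notin B$. By Corollary \ref{cor-ext}, $\ell$ increases along the ray for $j$ precisely when $j\in\op{Ext}_\rho(B)$. Hence $w(\lambda^B)=\op{ext}_\rho(B)$. Lemma \ref{lemma-h} then gives
\[
h(Q_x;t)=\sum_{B:\,x\in C_B}t^{\op{ext}_\rho(B)}=\sum_B 1_{C_B}(x)\,t^{\op{ext}_\rho(B)}=\tilde h_{A,\rho}(x;t).
\]

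The only point needing care is the identification of the edges of $Q_x$ with the extreme rays of the tangent cone $T_{\lambda^B}Q_x$. This is standard for polytopes: the faces of $Q_x$ through a vertex correspond to the faces of its tangent cone. It is also already used inside Lemma \ref{lemma-h}, so no real obstacle is expected.
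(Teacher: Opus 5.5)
Your proposal is correct and follows the paper's proof essentially step for step. Like the paper, you use the vertex--basis bijection (Lemma~\ref{lemma-vertex-basis}) and simplicity (Proposition~\ref{prop-simple}), then combine circuit-genericity of $\rho$ with Corollary~\ref{cor-ext} to get nonconstancy on edges and $w(\lambda^B)=\op{ext}_\rho(B)$, and conclude with Lemma~\ref{lemma-h}. You also spell out explicitly that each $c^{B,j}$ is a circuit vector, which the paper leaves implicit.
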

\begin{proof}
  Lemma \ref{lemma-vertex-basis} identifies the vertices of $Q_x$ with the labeled bases $B$ for which $x\in C_B$.
  Proposition \ref{prop-simple} says $Q_x$ is a simple polytope.
  Since $\rho$ is circuit-generic, Corollary \ref{cor-ext} shows that $\lambda\mapsto\langle\rho,\lambda\rangle$ is nonconstant on every edge and has exactly $\op{ext}_\rho(B)$ increasing edges at $\lambda^B$.
  The result now follows from Lemma \ref{lemma-h}.
\end{proof}

\section{Barycentric-fiber integration and trapezoidality}
\label{sec-integration}

In this section, we prove that $f_A(t)$ equals the integral of the $h$-polynomials of the generic barycentric fibers over the full-measure locus $C_{\mathrm{generic}}$. We then derive the trapezoidality of $f_A(t)$ from the classical $g$-theorem for polytopes.

First we define a measure on $H=\varphi^{-1}(1)$ by the volume of its truncated cone. For a Borel measurable set $S\subset H$, let
\begin{equation}
  \mu(S)\coloneqq \op{Vol}(\{sx,\,x\in S,\, 0\leq s\leq1\}).
\end{equation}
To see that this is a Borel measure, choose $x_0\in H$ and identify $H=x_0+\op{ker}\varphi$. In these coordinates, the Jacobian of
\begin{equation*}
  (0,1]\times H\longrightarrow V,\qquad (s,x)\longmapsto sx,
\end{equation*}
is a positive constant times $s^{d-1}$. This map is a bijection onto $\{y\in V \,\,|\,\, 0<\varphi(y)\leq1\}$, with inverse $y\mapsto(\varphi(y),y/\varphi(y))$. Consequently, $\mu$ is a positive constant multiple of the $(d-1)$-dimensional Lebesgue measure on $H$. In particular,
\begin{equation}
  \label{eq-generic-full-measure}
  \mu(C\backslash C_{\mathrm{generic}})=0,
\end{equation}
because this complement is contained in the relative boundary of $C$ together with finitely many convex hulls of fewer than $d$ points.

\begin{lemma}
  \label{lemma-volume}
  The volume of $\Pi_B$ and $\mu(C_B)$ are related by
  \begin{equation}
    \op{Vol}\Pi_B=d! \mu(C_B).
  \end{equation}
\end{lemma}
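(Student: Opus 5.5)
Recall that $\mu(C_B)$ is by definition the volume of the truncated cone $K_B\coloneqq\{sx\mid x\in C_B,\ 0\le s\le 1\}$. The plan is to identify $K_B$ with a standard simplex spanned by the basis vectors and compare it with the parallelotope $\Pi_B$ through the single linear isomorphism $A_B$.

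First, I will show that
\[
K_B=\op{conv}(\{0\}\cup\{a_b\mid b\in B\})=A_B(\Sigma),\qquad \Sigma\coloneqq\Big\{\lambda\in\R^B_{\ge0}\ \Big|\ \sum_{b\in B}\lambda_b\le1\Big\}.
\]
For the inclusion $K_B\subset A_B(\Sigma)$, take $x=\sum_b\lambda_b a_b$ with $\lambda$ a probability vector and $s\in[0,1]$. Then $sx=A_B(s\lambda)$, and $s\lambda\in\Sigma$.

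For the reverse inclusion, take $\lambda\in\Sigma$ and set $s=\sum_b\lambda_b\in[0,1]$. If $s=0$ the point is $0\in K_B$. Otherwise, put $x=A_B(\lambda/s)\in C_B$; then $A_B\lambda=sx$. This is the one place where flatness matters: $\varphi(a_b)=1$ guarantees that the points $a_b$ lie in $H$, so $x\in C_B\subset H$. It also makes the $s$-coordinate equal to $\varphi(sx)$, which is consistent with the parametrization used to show that $\mu$ is a measure.

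Second, observe that $\Pi_B=A_B([0,1]^B)$. Since $A_B$ is a linear isomorphism, it scales all volumes by the same factor $|\det A_B|$ (computed with respect to the standard volume form on $V$). Therefore
\[
\op{Vol}\Pi_B=|\det A_B|\cdot\op{Vol}[0,1]^d=|\det A_B|,
\qquad
\mu(C_B)=\op{Vol}K_B=|\det A_B|\cdot\op{Vol}\Sigma=\frac{|\det A_B|}{d!}.
\]

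The remaining ingredient is the classical fact $\op{Vol}\Sigma=1/d!$. One proof is that the unit cube decomposes into $d!$ congruent simplices $\{\lambda_{\sigma(1)}\ge\cdots\ge\lambda_{\sigma(d)}\}$, each unimodularly equivalent to $\Sigma$. A second proof is a direct induction on $d$ by iterated integration.

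Combining the two displays gives $\op{Vol}\Pi_B=d!\,\mu(C_B)$.

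There is no real obstacle here. The only points needing care are the reverse inclusion $A_B(\Sigma)\subset K_B$, including the degenerate case $s=0$, and keeping the normalization of $\op{Vol}$ the same on both sides. The latter is automatic because both quantities are computed with the same Lebesgue measure on $V$.
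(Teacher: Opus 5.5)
Your proposal is correct and follows the paper's proof. The paper likewise identifies the truncated cone over $C_B$ with the simplex $\op{conv}(\{0\}\cup\{a_b\mid b\in B\})$ of volume $|\det A_B|/d!$ and compares it with $\op{Vol}\Pi_B=|\det A_B|$; you supply the details it leaves implicit, namely the set equality $K_B=A_B(\Sigma)$ and the fact $\op{Vol}\Sigma=1/d!$. One small correction: the set equality does not actually use flatness, which is needed only so that $C_B\subset H$ and hence $\mu(C_B)$ is defined.
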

\begin{proof}
  The truncated cone over $C_B$ is the simplex
  \begin{equation*}
    \op{conv}(\{0\}\cup\{a_b \,\,|\,\, b\in B\}).
  \end{equation*}
  Its volume is $|\det A_B|/d!$, whereas the volume of the parallelotope $\Pi_B$ is $|\det A_B|$.
\end{proof}

For any circuit-generic $\rho$, we can now reinterpret $f_{A,\rho}(t)$:
\begin{gather*}
  f_{A,\rho}(t)=\sum_{B}t^{\op{ext}_{\rho}(B)}\op{Vol}\Pi_B = \sum_{B}t^{\op{ext}_{\rho}(B)} d!\mu(C_B)\\
  = d! \sum_{B}\int_C t^{\op{ext}_{\rho}(B)} 1_{C_B}(x) d\mu(x) = d! \int_C \big(\sum_{B}t^{\op{ext}_{\rho}(B)} 1_{C_B}(x)\big) d\mu(x)\\
  = d! \int_C \tilde{h}_{A,\rho}(x;t) d\mu(x) = d! \int_{C_{\mathrm{generic}}} h(Q_x;t) d\mu(x).
\end{gather*}
Here the last equality follows from Proposition \ref{prop-fiber-h} and \eqref{eq-generic-full-measure}. Since $C\setminus C_{\mathrm{generic}}$ is $\mu$-null, the values on nongeneric, possibly nonsimple fibers can be omitted.
Therefore, we have

\begin{proposition}
  \label{prop-bary-formula}
  For every circuit-generic $\rho$, one has the barycentric-fiber integration formula
  \begin{equation}
    f_{A,\rho}(t)=d! \int_{C_{\mathrm{generic}}} h(Q_x;t) d\mu(x).
  \end{equation}
  In particular, $f_{A,\rho}(t)$ is independent of $\rho$ and equals $f_A(t)$.
\end{proposition}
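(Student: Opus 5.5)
The proof is essentially already assembled in the displayed chain of equalities preceding the statement. I would simply make each step rigorous. Fix a circuit-generic $\rho$ and start from the definition of $f_{A,\rho}(t)$. By Lemma \ref{lemma-volume}, each weight $\op{Vol}\Pi_B$ equals $d!\,\mu(C_B)=d!\int_C 1_{C_B}\,d\mu$. The sum over labeled bases is finite, and each coefficient of $t^k$ is a finite sum of integrals of bounded indicator functions of compact sets. Hence sum and integral can be interchanged coefficientwise, which gives
\[
  f_{A,\rho}(t)=d!\int_C \tilde h_{A,\rho}(x;t)\,d\mu(x).
\]
Here all integrals are understood coefficient by coefficient in $t$.

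Next I would restrict the domain of integration. By \eqref{eq-generic-full-measure}, $C\setminus C_{\mathrm{generic}}$ is $\mu$-null, and $\tilde h_{A,\rho}(x;t)$ is bounded coefficientwise (by the number of labeled bases). So the integral over $C$ equals the integral over $C_{\mathrm{generic}}$. On $C_{\mathrm{generic}}$, Proposition \ref{prop-fiber-h} replaces $\tilde h_{A,\rho}(x;t)$ by $h(Q_x;t)$. The only point worth checking is measurability of $x\mapsto h(Q_x;t)$, and this is immediate: on $C_{\mathrm{generic}}$ it agrees with the finite sum of indicator functions $\tilde h_{A,\rho}$, which is Borel. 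This establishes the displayed formula.

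For the independence statement, I would observe that $h(Q_x;t)$ is defined intrinsically as $\sum_F (t-1)^{\dim F}$ over the faces of $Q_x$. The polytope $Q_x$ depends only on $A$ and $x$, and the measure $\mu$ and the locus $C_{\mathrm{generic}}$ are likewise defined without reference to $\rho$. Therefore the right-hand side does not depend on $\rho$, so $f_{A,\rho}(t)$ is the same for all circuit-generic $\rho$, and this common value is $f_A(t)$.

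There is no real obstacle here. The only delicate step is the bookkeeping just described: interchanging the finite sum with the integral, discarding the null set where fibers may fail to be simple, and noting that Lemma \ref{lemma-h} (via Proposition \ref{prop-fiber-h}) makes the $\rho$-dependent count $\sum_{x\in C_B}t^{\op{ext}_\rho(B)}$ coincide with the $\rho$-free face count. This coincidence is exactly what yields independence.
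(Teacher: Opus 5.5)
Your proposal is correct and follows essentially the same route as the paper. You rewrite $\op{Vol}\Pi_B$ as $d!\,\mu(C_B)$ via Lemma~\ref{lemma-volume}, interchange the finite sum with the integral, discard the $\mu$-null set $C\setminus C_{\mathrm{generic}}$, apply Proposition~\ref{prop-fiber-h}, and conclude $\rho$-independence because the right-hand side depends only on the fibers $Q_x$. Your extra remarks on measurability and coefficientwise integration are harmless refinements; note one small typo: the sum in your last paragraph should run over the labeled bases $B$ with $x\in C_B$.
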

\begin{proof}
  The formula was established above. Its right-hand side depends only on the fibers $Q_x$, so it is independent of $\rho$.
\end{proof}

It remains to use the trapezoidal property of each polynomial $h(Q_x;t)$. This follows from the necessity direction of the classical $g$-theorem \cite[Theorem~8.35]{Zie}, applied to the simplicial polytope dual to $Q_x$. This direction was proved by Stanley \cite{StanleyGTheorem}; the characterization of $M$-sequences used in its statement goes back to Macaulay \cite{Mac27}. We record only the part that we need:

\begin{theorem}
  \label{thm-g-theorem}
  Let $Q$ be a simple $r$-polytope and
  \begin{equation}
    h(Q;t)=\sum_{i=0}^r h_i(Q)t^i.
  \end{equation}
  Then 
  \begin{equation*}
      h_0(Q)=1,\quad h_i(Q)=h_{r-i}(Q)\quad (0\leq i\leq r).
  \end{equation*}
  Moreover, the tuple $(g_0(Q),\dots,g_m(Q))$, $m=\lfloor r/2\rfloor$ defined by
  \begin{equation*}
    g_0(Q)=1,\quad g_i(Q)=h_i(Q)-h_{i-1}(Q), \,1\leq i\leq m
  \end{equation*}
  is an $M$-sequence. Consequently,
  \begin{equation}
    g_i(Q)\geq 0,\quad g_i(Q)=0 \Longrightarrow g_j(Q)=0\text{  for  } i\leq j\leq m.
  \end{equation}
\end{theorem}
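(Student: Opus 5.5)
The plan is to read off the first two assertions directly from Lemma~\ref{lemma-h}, and to import the $M$-sequence assertion from the $g$-theorem \cite[Theorem~8.35]{Zie} via polar duality. Fix a linear functional $\ell$ that is nonconstant on every edge of $Q$; such functionals exist because they avoid finitely many hyperplanes.

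\emph{Step 1: $h_0(Q)=1$.} By Lemma~\ref{lemma-h}, $h_0(Q)$ counts the vertices $v$ with $w(v)=0$, that is, the vertices from which $\ell$ decreases along every edge. Such a $v$ minimizes $\ell$ on the tangent cone $T_vQ$, which contains $Q-v$. Hence $v$ is a minimizer of $\ell$ on $Q$. By Lemma~\ref{lemma-minimizer} applied to the face $Q$ itself, this minimizer is unique. Conversely, the unique minimizer has $w=0$.

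\emph{Step 2: Dehn--Sommerville symmetry.} The expression $h(Q;t)=\sum_i f_i(Q)(t-1)^i$ depends only on $Q$, not on $\ell$. I will therefore apply Lemma~\ref{lemma-h} a second time, with $-\ell$ in place of $\ell$. Since $Q$ is simple, each vertex has exactly $r$ edges, and $-\ell$ increases along precisely those edges on which $\ell$ decreases. The new weight is therefore $r-w(v)$. Comparing the two expansions gives
\[
  \sum_v t^{w(v)}=\sum_v t^{r-w(v)},
\]
which is exactly $h_i(Q)=h_{r-i}(Q)$.

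\emph{Step 3: the $M$-sequence property.} Translate so that an interior point of $Q$ is the origin, and let $P=Q^\ast$ be the polar polytope. Then $P$ is a simplicial $r$-polytope, and the face lattice of $P$ is anti-isomorphic to that of $Q$. In particular, $f_{i-1}(P)=f_{r-i}(Q)$ for $0\le i\le r$, with the convention $f_{-1}(P)=1$.

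The step I expect to be the main obstacle is the bookkeeping check that the simplicial $h$-vector of $P$ agrees with the $h$-vector of $Q$ defined here, possibly after reversal. Concretely, I need
\[
  \sum_i h_i(P)t^{r-i}=\sum_{i}f_{i-1}(P)(t-1)^{r-i}.
\]
Substituting $f_{i-1}(P)=f_{r-i}(Q)$ turns the right-hand side into $\sum_j f_j(Q)(t-1)^j=h(Q;t)$. Thus $h_i(Q)=h_{r-i}(P)$, and by Step~2 this equals $h_i(P)$. The necessity direction of the $g$-theorem (Stanley \cite{StanleyGTheorem}) then states that $(g_0(P),\dots,g_m(P))$ is an $M$-sequence, and this is our tuple $(g_0(Q),\dots,g_m(Q))$.

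\emph{Step 4: the consequence.} By definition, an $M$-sequence consists of nonnegative integers with $g_0=1$ and satisfies Macaulay's condition $g_{i+1}\le g_i^{\langle i\rangle}$ for $i\ge1$ \cite{Mac27}. This gives $g_i\ge0$ immediately. For zero-propagation, write $g_i$ in its $i$-binomial expansion; since $g_i=0$ this expansion is empty, so $0^{\langle i\rangle}=0$ and hence $g_{i+1}=0$. Induction on $j$ then gives $g_j=0$ for all $i\le j\le m$.
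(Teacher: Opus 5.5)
Your proposal follows the paper's route. The paper likewise gets the statement from the necessity direction of the $g$-theorem applied to the simplicial polar of $Q$, and derives zero-propagation from Macaulay's growth inequalities exactly as in your Step~4; your Steps~1--2 (Lemma~\ref{lemma-h} with $\ell$ and $-\ell$) and your $h$-vector bookkeeping in Step~3 just make explicit what the paper leaves to the citation.

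One slip needs correcting in Step~1. A vertex with $w(v)=0$ has $\ell$ decreasing along every edge, so it is the unique \emph{maximizer} of $\ell$ on $Q$ (apply Lemma~\ref{lemma-minimizer} to $-\ell$), not the minimizer; the minimizer is the vertex with $w(v)=r$. With ``minimizer'' replaced by ``maximizer'' throughout that step, the argument is correct.
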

An $M$-sequence is a nonnegative integer sequence satisfying Macaulay's growth inequalities. If one term is zero, these inequalities force all subsequent terms to vanish; this proves the final implication.

Now we have enough ingredients to prove the main theorem:

\begin{theorem}
  \label{thm-main}
  Let $A$ be a full-dimensional flat vector arrangement. The coefficient sequence of $f_A(t)$ is trapezoidal.
\end{theorem}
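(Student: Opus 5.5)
The plan is to combine Proposition~\ref{prop-bary-formula} with Theorem~\ref{thm-g-theorem}, using positivity of the integral, and to handle the zero-propagation condition by showing that all generic fibers essentially share the "same" first plateau index.

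Set $r=N-d$ and $m=\lfloor r/2\rfloor$. For each $x\in C_{\mathrm{generic}}$, the fiber $Q_x$ is a simple $r$-polytope by Proposition~\ref{prop-simple}, so Theorem~\ref{thm-g-theorem} applies to it. The function $x\mapsto h_i(Q_x)$ equals the coefficient of $t^i$ in $\tilde h_{A,\rho}(x;t)$, which is a finite sum of indicator functions. Hence it is measurable and bounded, and Proposition~\ref{prop-bary-formula} gives, coefficientwise,
\[
  [t^i]f_A(t)=d!\int_{C_{\mathrm{generic}}} h_i(Q_x)\,d\mu(x).
\]
Consequently $f_A$ has degree at most $r$. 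Its coefficients are palindromic, $[t^i]f_A=[t^{r-i}]f_A$, because each $h(Q_x;t)$ is palindromic. They are strictly positive: $h_i(Q_x)\geq h_0(Q_x)=1$ for $i\leq m$ by nonnegativity of the $g_i$, symmetry extends this to all $i$, and $\mu(C_{\mathrm{generic}})=\mu(C)>0$. The differences are
\[
  \delta_i=d!\int g_i(Q_x)\,d\mu(x)\geq 0 .
\]

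The remaining point, which is the main obstacle, is zero propagation. Suppose $\delta_i=0$ for some $i\leq m$. Since $g_i(Q_x)\geq0$ and the integral vanishes, $g_i(Q_x)=0$ for $\mu$-almost every $x\in C_{\mathrm{generic}}$. The $M$-sequence part of Theorem~\ref{thm-g-theorem} then gives $g_j(Q_x)=0$ for all $i\leq j\leq m$ at each such $x$. The exceptional set is $\mu$-null, so $\delta_j=d!\int g_j(Q_x)\,d\mu=0$.

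So the integral argument handles this pointwise with no need to compare different fibers. One can even note that $g_i(Q_x)$ is constant on the finitely many chambers of the arrangement of the $C_B$, which avoids measure-theoretic subtleties. The only care needed is the edge case $r=0$ ($N=d$). There $f_A=\operatorname{Vol}\Pi_{[N]}>0$ is a single positive coefficient and the condition is vacuous.
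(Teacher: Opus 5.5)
Your proof is correct and takes the same route as the paper. You integrate the fiber $h$-polynomials coefficientwise via Proposition~\ref{prop-bary-formula}, take symmetry and $g_i\geq 0$ from Theorem~\ref{thm-g-theorem}, and get zero propagation because the nonnegative integrand $g_i(Q_x)$ vanishes almost everywhere. Your opening framing about all generic fibers sharing a common first plateau index is not needed, and it is not true in general; the pointwise almost-everywhere argument you actually give does not rely on it.
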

\begin{proof}
  Let $r=N-d$.
  Since $\op{Ext}_\rho(B)\subset[N]\backslash B$ for every labeled basis $B$, one has $\deg f_A\leq r$.
  We can write
  \begin{equation*}
    f_A(t)=\sum_{i=0}^{r}a_i t^i,\quad h(Q_x;t)=\sum_{i=0}^r h_i(x)t^i,
  \end{equation*}
  where $h_i(x)$ is defined on $C_{\mathrm{generic}}$. Denote $m=\lfloor r/2\rfloor$.
  By Proposition \ref{prop-bary-formula}, we have
  \begin{equation*}
    a_i=d!\int_{C_{\mathrm{generic}}} h_i(x)d\mu(x).
  \end{equation*}
  
  By Theorem \ref{thm-g-theorem}, $h_i(x)=h_{r-i}(x)$, which gives
  \begin{equation}
    \label{eq-symmetry}
    a_i=a_{r-i},\quad i=0,\dots,m.
  \end{equation}
  
  For each $1\leq i\leq m$, 
  \begin{equation}
    \label{eq-monotone}
    a_i-a_{i-1}=d! \int_{C_{\mathrm{generic}}} (h_i(x)-h_{i-1}(x))d\mu(x)\geq0.
  \end{equation}
  The functions $h_i$ are measurable because, on $C_{\mathrm{generic}}$, they are coefficients of the finite sum $\tilde h_{A,\rho}$. Suppose $a_i-a_{i-1}=0$ for some $1\leq i\leq m$. Since the integrand in \eqref{eq-monotone} is nonnegative, $h_i(x)-h_{i-1}(x)=0$ for almost every $x\in C_{\mathrm{generic}}$.
  Therefore, Theorem \ref{thm-g-theorem} implies $h_j(x)-h_{j-1}(x)=0$ for almost every $x\in C_{\mathrm{generic}}$ for every $i\leq j\leq m$, and hence
  \begin{equation}
    \label{eq-plateau}
    a_j-a_{j-1}=0\text{  for  }i\leq j\leq m.
  \end{equation} 
  
  Finally, since $h_0(x)=1$ for $x\in C_{\mathrm{generic}}$,
  \begin{equation}
    \label{eq-positive}
    a_0=d! \mu(C_{\mathrm{generic}})=d! \mu(C)>0,
  \end{equation}
  where the last inequality holds because $C$ is full-dimensional in $H$. It follows from \eqref{eq-symmetry} and \eqref{eq-monotone} that $a_i>0$ for all $i$.
  
  All conditions of Definition \ref{def-trapezoidal} are now fulfilled. 
\end{proof}

\section{Eulerian digraphs and the Murasugi--Stoimenow polynomial}
\label{sec-eulerian-digraphs}

In this section, we apply Theorem \ref{thm-main} to the Murasugi–Stoimenow polynomial and prove Corollary \ref{cor-euleriandiagraph-trapezoidal}. The bridge to flat arrangements is the cographic realization of Kálmán, Mészáros, and Postnikov \cite[Sections~4.2 and~5.1]{KMP}.

We first recall some standard terminology from graph theory.
A \emph{digraph} $D$ is a finite directed multigraph with vertex set $V(D)$ and edge set $E(D)$; parallel edges and loops are allowed. Its \emph{underlying graph} is obtained by forgetting the edge directions while retaining edge multiplicities, and $D$ is \emph{connected} when this underlying graph is connected. For $v\in V(D)$, the out-degree $\deg_D^+(v)$ and in-degree $\deg_D^-(v)$ count outgoing and incoming edge-ends, respectively; a loop contributes one to each.
A connected digraph $D$ is \emph{Eulerian} if
\begin{equation*}
  \deg_D^+(v)=\deg_D^-(v)\qquad (v\in V(D)).
\end{equation*}

A \emph{spanning tree of $D$} means a spanning tree of its underlying graph; parallel edges remain distinct choices. Fix a root vertex $v_0\in V(D)$ and root such a tree $T$ at $v_0$. Each edge of $T$ then joins a vertex to its parent. The edge points \emph{toward} $v_0$ when it is directed from the child to the parent, and \emph{away from} $v_0$ otherwise. An \emph{oriented spanning tree rooted at $v_0$} is a spanning tree in which every edge points toward $v_0$; equivalently, for every vertex $v$, the unique path in $T$ from $v$ to $v_0$ is directed toward the root. For an arbitrary spanning tree $T$, define
\begin{equation*}
  \kappa_{v_0}(T)\coloneqq \#\{e\in E(T)\,\,|\,\, e\text{ points away from $v_0$ in the rooted tree $T$}\}.
\end{equation*}
Following the terminology in \cite[Section~2.3]{KMP}, we call $T$ a
\emph{$k$-spanning tree rooted at $v_0$} when $\kappa_{v_0}(T)=k$.
Reversing precisely these $k$ edges turns $T$ into an oriented spanning tree rooted at $v_0$.

Loops do not lie in a spanning tree, and deleting them preserves the Eulerian balance condition and the polynomial defined below. Therefore, we suppress loops, continue to denote the resulting loopless digraph by $D$, and let $n=|V(D)|$ and $m=|E(D)|$.

\begin{definition}
  \label{def-Murasugi--Stoimenow}
  For $0\leq k\leq n-1$, let
  \begin{equation*}
    c_k(D,v_0)\coloneqq
    \#\{T \,\,|\,\, T\text{ is a $k$-spanning tree of $D$ rooted at $v_0$}\}.
  \end{equation*}
  The \emph{Murasugi--Stoimenow polynomial} of the rooted Eulerian digraph $(D,v_0)$ is
  \begin{equation}
    \label{eq-Murasugi--Stoimenow}
    P_{D,v_0}(t)\coloneqq \sum_{k=0}^{n-1}c_k(D,v_0)t^k.
  \end{equation}
\end{definition}

Murasugi and Stoimenow proved that $c_k(D,v_0)$ is independent of the root,
\begin{equation}
  \label{eq-ms-palindromicity}
  c_k(D,v_0)=c_{n-1-k}(D,v_0)\quad\text{  for}\quad 0\leq k\leq n-1,
\end{equation}
and that every coefficient in this range is positive \cite[Proposition~1(1)-(4)]{MS03}. We therefore denote these simply as $c_k(D)$ and $P_D(t)$. 

We also recall the planar terminology used in the knot-theoretic application. A \emph{plane graph} is a graph equipped with a fixed embedding in the oriented plane; in particular, the incident edge-ends at each vertex have a cyclic order. A \emph{dimap}, or directed map, is a plane graph whose edges are directed. An \emph{alternating dimap} is a plane Eulerian digraph for which incoming and outgoing edge-ends alternate in the cyclic order around every vertex \cite[Section~2.3]{KMP}. If $G$ is a plane bipartite graph, orient each edge of its planar dual $D=G^*$ so that a fixed color class of $G$ lies on the right of the dual edge. This orientation makes $D$ an alternating dimap. For the corresponding special alternating link $L_G$, Murasugi and Stoimenow proved
\begin{equation*}
  P_D(t)\doteq \Delta_{L_G}(-t),
\end{equation*}
where $\doteq$ denotes equality up to multiplication by $\pm t^j$ \cite[Theorem~2]{MS03}.

Next, we record the cut-balance consequence of the Eulerian condition.

\begin{lemma}
  \label{lemma-eulerian-cut-balance}
  Let $D$ be Eulerian and let $V(D)=S\sqcup S^c$. Then the number of edges directed from $S$ to $S^c$ equals the number directed from $S^c$ to $S$.
\end{lemma}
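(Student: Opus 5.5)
The plan is a double-counting argument using the balance condition at each vertex. Write $e(X,Y)$ for the number of edges of $D$ directed from a vertex in $X$ to a vertex in $Y$. Loops, if present, contribute to $e(S,S)$ or $e(S^c,S^c)$ and to both the in- and out-degree of their vertex, so they cancel throughout; we may keep them or suppress them as above. I will sum the Eulerian identity $\deg_D^+(v)=\deg_D^-(v)$ over all $v\in S$.

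For the left-hand side, every edge whose tail lies in $S$ has its head either in $S$ or in $S^c$. Hence
\begin{equation*}
  \sum_{v\in S}\deg_D^+(v)=e(S,S)+e(S,S^c).
\end{equation*}
For the right-hand side, every edge whose head lies in $S$ has its tail either in $S$ or in $S^c$. Hence
\begin{equation*}
  \sum_{v\in S}\deg_D^-(v)=e(S,S)+e(S^c,S).
\end{equation*}
Equating the two sums and cancelling the common term $e(S,S)$ gives $e(S,S^c)=e(S^c,S)$, which is the claim.

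The only point that needs care is the bookkeeping convention for edge-ends. An edge with both endpoints in $S$, including a loop, is counted exactly once in $\sum_{v\in S}\deg^+_D(v)$ through its tail and exactly once in $\sum_{v\in S}\deg^-_D(v)$ through its head. Each crossing edge is counted in exactly one of the two sums. Once this is checked, the argument is immediate, and no step is a genuine obstacle. The case $S=\emptyset$ or $S=V(D)$ is trivial, since both sides of the claimed identity are zero.
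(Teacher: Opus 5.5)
Your proposal is correct and is essentially the paper's argument: the paper also sums $\deg_D^+(v)-\deg_D^-(v)=0$ over $v\in S$, cancels the edges with both endpoints in $S$, and reads off the crossing counts. Writing the two sums as $e(S,S)+e(S,S^c)$ and $e(S,S)+e(S^c,S)$ is the same bookkeeping made explicit.
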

\begin{proof}
  We take the sum of $\deg_D^+(v)-\deg_D^-(v)=0$ over all $v\in S$. Each edge with both endpoints in $S$ contributes once with each sign and cancels. The remaining terms count the edges from $S$ to $S^c$ positively and those from $S^c$ to $S$ negatively, so their difference is zero.
\end{proof}

Let us also recall the cographic matrix of \cite[Definition~4.8]{KMP}.
Choose a spanning tree $T_0$ of the underlying graph and let
\begin{equation*}
  F\coloneqq E(D)\backslash E(T_0),\qquad \beta\coloneqq |F|=m-n+1.
\end{equation*}
We write $F=\{e_1,\dots,e_\beta\}$ and let $\mathbf{e}_1,\dots,\mathbf{e}_\beta$ be the standard basis of $\R^\beta$. For every cotree edge $e_i\in F$, define $b_{e_i}=\mathbf{e}_i$.

For a tree edge $a\in E(T_0)$, deleting $a$ divides $T_0$ into vertex sets $S_a$ and $S_a^c$. For $1\leq i\leq \beta$, define
\begin{equation*}
  \varepsilon_{a,i}\coloneqq
  \begin{cases}
    0, & e_i\text{ does not cross the cut }S_a\,|\, S_a^c,\\
    1, & e_i\text{ crosses the cut in the direction opposite to }a,\\
    -1, & e_i\text{ crosses the cut in the same direction as }a,
  \end{cases}
\end{equation*}
and set
\begin{equation}
  \label{eq-cographic-column}
  b_a\coloneqq \sum_{i=1}^\beta\varepsilon_{a,i}\mathbf e_i.
\end{equation}
Let $\{\delta_e \,\,|\,\, e\in E(D)\}$ denote the standard basis of $\R^{E(D)}$. The resulting labeled matrix
\begin{equation}
  \label{eq-cographic-matrix}
  B_{D,T_0}:\R^{E(D)}\to\R^\beta, \qquad B_{D,T_0}(\delta_e)=b_e,
\end{equation}
has the block form $[K \,|\, I_\beta]$. 
The following proposition collects the properties of the cographic matrix that we need from \cite{KMP}.

\begin{proposition}
  \label{prop-flat-cographic}
  Let $D$ be a connected Eulerian digraph with at least two vertices, and let $T_0$ be a spanning tree. Then the columns of $B_{D,T_0}$ have the following properties.
  \begin{enumerate}
    \item They form a full-dimensional flat arrangement. More precisely, for
    \begin{equation*}
      \varphi_D(x_1,\dots,x_\beta)\coloneqq x_1+\cdots+x_\beta,
    \end{equation*}
    we have $\varphi_D(b_e)=1$ for every $e\in E(D)$.
    
    \item The matrix $B_{D,T_0}$ is totally unimodular and represents the cographic matroid of the underlying graph, with the orientation determined by $D$.
    
    \item With the standard volume form on $\R^\beta$,
    \begin{equation}
      \label{eq-pd-equals-fa}
      P_D(t)=f_{B_{D,T_0}}(t).
    \end{equation}
    In particular, the corank of this arrangement is
    \begin{equation}
      \label{eq-cographic-corank}
      |E(D)|-\op{rank}B_{D,T_0}=m-\beta=n-1.
    \end{equation}
  \end{enumerate}
\end{proposition}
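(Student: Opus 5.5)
The statement to prove is Proposition~\ref{prop-flat-cographic}. I would prove its three parts in order, using Lemma~\ref{lemma-eulerian-cut-balance} for flatness and citing \cite{KMP} for the identification of polynomials.

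\textbf{Part (1): flatness.} For a cotree edge, $\varphi_D(b_{e_i})=\varphi_D(\mathbf e_i)=1$ is immediate. For a tree edge $a$, I would compute
\[
  \varphi_D(b_a)=\sum_i\varepsilon_{a,i}=\#\{\text{cotree edges crossing opposite to }a\}-\#\{\text{cotree edges crossing in the direction of }a\}.
\]
The cut $S_a\,|\,S_a^c$ is crossed by exactly one tree edge, namely $a$ itself, together with some cotree edges. Lemma~\ref{lemma-eulerian-cut-balance} says the edges crossing in each direction are equinumerous. The edges going in $a$'s direction are $a$ plus the cotree edges in that direction, so
\[
  1+\#\{\text{same}\}=\#\{\text{opposite}\},
\]
which gives $\varphi_D(b_a)=1$. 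Full-dimensionality follows because the identity block $I_\beta$ lies among the columns. The hypothesis of at least two vertices is not used here.

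\textbf{Part (2): total unimodularity and the matroid.} The matrix $[K\,|\,I_\beta]$ is the standard fundamental-cut/cycle representation. The row of $\mathbf e_i$ records the signed fundamental cycle of $e_i$ with respect to $T_0$: a tree edge $a$ lies on the cycle of $e_i$ exactly when $e_i$ crosses the cut of $a$, and the sign records the relative orientation. So the rows of $B_{D,T_0}$ span the cycle space of the oriented graph. A matrix whose row space is the cycle space of a graph represents the cographic matroid. Such a matrix is a network matrix in reduced form, and hence totally unimodular by the classical result; alternatively, one can cite \cite[Definition~4.8]{KMP} directly.

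\textbf{Part (3): the identity $P_D=f_{B_{D,T_0}}$ and the corank.}
\begin{itemize}
  \item Total unimodularity gives $\op{Vol}\Pi_B=|\det|=1$ for every basis. Hence $f$ counts bases weighted by $t^{\op{ext}_\rho(B)}$.
  \item Bases of the cographic matroid are complements of spanning trees.
  \item The remaining step is to choose a circuit-generic $\rho$ such that $\op{ext}_\rho(E\setminus T)=\kappa_{v_0}(T)$. Here the externally semi-active elements are tree edges, since the circuits of the cographic matroid are bonds. I expect this to be the main obstacle. The identification is carried out in \cite[Sections~4.2 and~5.1]{KMP}, and I would invoke it there rather than reprove it.
  \item If I did reprove it, I would pick $\rho$ giving a strong weight to edges in a way that singles out $v_0$. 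Then for each tree edge $a\in T$, the fundamental bond of $a$ relative to $E\setminus T$ is oriented by $\rho$ toward the side of the cut containing $v_0$, so $a$ is semi-active iff it points away from $v_0$.
\end{itemize}
The corank is $m-\beta=m-(m-n+1)=n-1$, since $\op{rank}B_{D,T_0}=\beta$.
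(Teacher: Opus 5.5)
\textbf{Parts (1) and (2).} Your Part (1) is the paper's argument: cut balance on the fundamental cut of $a$ in $T_0$ gives $1+p=q$, hence $\varphi_D(b_a)=q-p=1$. For Part (2) the paper only cites \cite[Theorem~4.9 and Remark~4.13]{KMP}. Your alternative is correct and more self-contained: row $i$ of $B_{D,T_0}$ is the signed fundamental cycle of $e_i$, so $\ker B_{D,T_0}$ is the cut space of $D$ and $K$ is, up to sign and transpose, a network matrix. (A \emph{Definition} in \cite{KMP} would not by itself give total unimodularity.) One small correction: the hypothesis $n\geq 2$ is used. It forces every vertex to have out-degree at least one, so $m\geq n$ and $\beta\geq 1$; without it $\varphi_D=0$ and you are not in the setting $d\geq 1$ of Section~\ref{sec-flat-arrangements}.

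\textbf{Part (3) is the gap.} Deferring to \cite{KMP} is legitimate, and the paper also ends by citing \cite[Lemma~5.1 and Theorem~5.3]{KMP}. But the step you call the main obstacle is the step the paper actually carries out, and your sketch of it fails as written, for two reasons.

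First, you need a single circuit-generic $\rho$ that orients the fundamental bond of every tree edge of every spanning tree consistently relative to $v_0$. ``A strong weight singling out $v_0$'' does not supply this. For instance, the all-ones vector pairs to zero with every signed cut vector of an Eulerian digraph, by Lemma~\ref{lemma-eulerian-cut-balance}.

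Second, your orientation is backwards. If the oriented bond pointed toward the side containing $v_0$, then $a$ would be semi-active exactly when it points toward $v_0$. You would get $\op{ext}_\rho(E(D)\setminus E(T))=n-1-\kappa_{v_0}(T)$, which recovers $P_D$ only after invoking Murasugi--Stoimenow palindromicity. What is needed is that the oriented cut vector be positive on edges leaving the side $S_a\ni v_0$.

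The paper gets this from an Eulerian tour $d_1,\dots,d_m$ starting at $v_0$, with $\rho_\sigma=\sum_j\eta^j\delta_{d_j}$ and $0<\eta\ll 1$. The sign of $\langle\rho_\sigma,c\rangle$ is the sign of $c$ on its tour-first edge. The tour's first crossing of a cut $S\,|\,S^c$ with $v_0\in S$ necessarily leaves $S$.

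A simpler fix is also available. The incidence map of the connected graph $D$ sends $\R^{E(D)}$ onto the sum-zero vectors in $\R^{V(D)}$. So you can choose $\rho$ with net outflow $n-1$ at $v_0$ and $-1$ at every other vertex. Let $c_S$ be the signed cut vector, equal to $+1$ on edges leaving $S$ and $-1$ on edges entering $S$. Then $\langle\rho,c_S\rangle=n-|S|>0$ for every proper $S\ni v_0$. This gives circuit-genericity and the required orientation at once.
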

\begin{proof}
  The identity block in \eqref{eq-cographic-matrix} shows that the columns span $\R^\beta$. It also gives $\varphi_D(b_{e_i})=1$ for every $e_i\in F$.

  Let $a\in E(T_0)$, and label the two components of $T_0-a$ by $S_a$ and $S_a^c$ so that $a$ is directed from $S_a$ to $S_a^c$. Let $P_a$ be the set of cotree edges crossing this cut from $S_a$ to $S_a^c$, and let $Q_a$ be the set of cotree edges crossing it from $S_a^c$ to $S_a$. Write
  \[
  p=|P_a|, \qquad q=|Q_a|.
  \]
  No edge of $T_0\setminus\{a\}$ crosses the cut, because $S_a$ and $S_a^c$ are the two components of $T_0-a$. Hence the complete set of edges directed from $S_a$ to $S_a^c$ is $\{a\}\sqcup P_a$, whereas the complete set of edges directed from $S_a^c$ to $S_a$ is $Q_a$. Lemma~\ref{lemma-eulerian-cut-balance} therefore gives
  \[
  1+p=q.
  \]
  By the definition of the signs $\varepsilon_{a,i}$, it follows that
  \[
  \varphi_D(b_a) = \sum_{i=1}^{\beta}\varepsilon_{a,i}
  = q-p = 1.
  \]
  Thus every column of $B_{D,T_0}$ lies in $\varphi_D^{-1}(1)$. This is the argument by \cite[Lemma~4.12]{KMP}.

  Total unimodularity and the representation of the underlying cographic matroid follow from \cite[Theorem~4.9]{KMP}. The fact that the represented oriented matroid is the oriented cographic matroid determined by $D$, independently of the auxiliary spanning tree $T_0$, is recorded in \cite[Remark~4.13]{KMP}. Consequently, every nonzero maximal minor of $B_{D,T_0}$ is equal to $\pm1$, so every basis parallelotope has volume one with respect to the standard volume form on $\mathbb R^\beta$.

  Fix the root $v_0$ used in the definition of $P_{D,v_0}(t)$. Choose an Eulerian tour of $D$ beginning at $v_0$, and write
  \[
  d_1\prec_\sigma d_2\prec_\sigma\cdots\prec_\sigma d_m
  \]
  for the edges in their order of traversal. Thus the tail of $d_1$ is $v_0$. Choose $0<\eta\ll1$ and define
  \[
  \rho_\sigma := \sum_{j=1}^{m}\eta^j\delta_{d_j} \in \mathbb R^{E(D)}.
  \]
  If $c$ is a circuit vector and $d_j$ is the $\sigma$-smallest edge in $\operatorname{supp}(c)$, then
  \[
  \langle\rho_\sigma,c\rangle = \eta^j \left(c_{d_j}+\sum_{\ell>j}c_{d_\ell}\eta^{\ell-j}\right).
  \]
  For sufficiently small $\eta$, this scalar is nonzero and has the same sign as $c_{d_j}$. For each circuit support, choose one nonzero circuit vector. Since there are only finitely many circuit supports, $\eta>0$ may be chosen sufficiently small, uniformly over these representatives, so that $\langle\rho_\sigma,c\rangle$ has the same sign as the coefficient of the $\sigma$-smallest edge of $c$. The same conclusion then holds for every nonzero scalar multiple. Thus $\rho_\sigma$ is circuit-generic and induces the order orientation described in \cite[Remark 3.2]{KMP}.

  For each spanning tree $T$ of the underlying graph, set
  \[
  B_T:=E(D)\setminus E(T).
  \]
  This is a basis of the cographic matroid represented by $B_{D,T_0}$. If $d\in E(T)$, then the unique cographic circuit contained in $B_T\cup\{d\}$ is the fundamental cut $C^*(T,d)$ determined by the two components of $T-d$. Let $S_d$ be the vertex set of the component containing $v_0$. The $\sigma$-smallest edge of $C^*(T,d)$ is the first edge of the Eulerian tour that crosses the cut $S_d\mid S_d^c$. Since the tour begins in $S_d$, this first crossing is directed from $S_d$ to $S_d^c$.

  It follows from \cite[Lemma~5.1]{KMP} that $d$ is externally semi-active for the basis $B_T$ with respect to $\rho_\sigma$ if and only if $d$ also points from $S_d$ to $S_d^c$. Equivalently, $d$ points away from $v_0$ in the rooted tree $T$. Therefore,
  \[
  \operatorname{ext}_{\rho_\sigma}(B_T)=\kappa_{v_0}(T)
  \]
  for every spanning tree $T$, which is \cite[Lemma~5.2]{KMP}. Since all basis parallelotopes have volume one, we obtain
  \[
  \begin{aligned}
  f_{B_{D,T_0}}(t)
  &= f_{B_{D,T_0},\rho_\sigma}(t)\\
  &= \sum_T t^{\operatorname{ext}_{\rho_\sigma}(B_T)}\\
  &= \sum_T t^{\kappa_{v_0}(T)}\\
  &= P_{D,v_0}(t)=P_D(t).
  \end{aligned}
  \]
  This proves \eqref{eq-pd-equals-fa}; equivalently, it is the cographic identity of \cite[Theorem~5.3]{KMP}. Finally, the identity block in $B_{D,T_0}$ gives
  \[
  \operatorname{rank}B_{D,T_0}=\beta.
  \]
  Since $\beta=m-n+1$, we have
  \[
  |E(D)|-\operatorname{rank}B_{D,T_0} = m-\beta = n-1,
  \]
  which proves \eqref{eq-cographic-corank}.
\end{proof}

Now we can prove the main result of this section:

\begin{corollary}
  \label{cor-eulerian-digraph-trapezoidality}
  Let $D$ be a connected Eulerian digraph on $n$ vertices, and write
  \begin{equation*}
    P_D(t)=\sum_{k=0}^{n-1}c_k(D)t^k.
  \end{equation*}
  The coefficient sequence is trapezoidal. More explicitly, with $q=\lfloor(n-1)/2\rfloor$,
  \begin{equation}
    \label{eq-eulerian-trapezoidality}
    c_k(D)=c_{n-1-k}(D)>0, \qquad c_0(D)\leq c_1(D)\leq\cdots\leq c_q(D).
  \end{equation}
  If $c_i(D)=c_{i-1}(D)$ for some $1\leq i\leq q$, then
  \begin{equation*}
    c_j(D)=c_{j-1}(D)\quad \text{  for}\quad i\leq j\leq q.
  \end{equation*}
  Equivalently, the coefficients strictly increase until a single centered plateau, which may be the whole sequence, and then decrease symmetrically.
\end{corollary}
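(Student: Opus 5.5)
The plan is to reduce everything to Theorem~\ref{thm-main} via Proposition~\ref{prop-flat-cographic}, with a separate trivial case for a single vertex.

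\emph{Case $n=1$.} After suppressing loops, $D$ has exactly one spanning tree, the empty one. Hence $P_D(t)=c_0(D)=1$. This is a single positive term, so \eqref{eq-eulerian-trapezoidality} holds trivially and the range $1\leq i\leq q$ in the plateau condition is empty, since $q=0$.

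\emph{Case $n\geq 2$.} Fix a spanning tree $T_0$ and put $A=B_{D,T_0}$. Proposition~\ref{prop-flat-cographic}(1) says $A$ is a full-dimensional flat arrangement in $\R^\beta$. Proposition~\ref{prop-flat-cographic}(3) gives $P_D(t)=f_A(t)$, and by \eqref{eq-cographic-corank} the corank is $r=N-d=m-\beta=n-1$. In the proof of Theorem~\ref{thm-main}, the coefficient sequence was indexed by $a_0,\dots,a_r$ with $r=N-d$, and $m=\lfloor r/2\rfloor$ there coincides with $q=\lfloor (n-1)/2\rfloor$ here. So $c_k(D)=a_k$ for $0\leq k\leq n-1$, with the same length, and the statement concerns the same sequence.

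Applying Theorem~\ref{thm-main} then gives the following, in order:
\begin{enumerate}
  \item palindromicity $c_k=c_{n-1-k}$, from \eqref{eq-symmetry};
  \item strict positivity of every coefficient, from \eqref{eq-positive} combined with monotonicity;
  \item the inequalities $c_0\leq\cdots\leq c_q$, from \eqref{eq-monotone};
  \item plateau propagation: if $c_i=c_{i-1}$ then $c_j=c_{j-1}$ for $i\leq j\leq q$, from \eqref{eq-plateau}.
\end{enumerate}
Palindromicity and positivity are also recorded independently in \eqref{eq-ms-palindromicity}, which gives a consistency check.

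Finally, I would rephrase the conclusion as ``strictly increase, centered plateau, symmetric decrease''. Let $i_0$ be the first index with $\delta_{i_0}=0$. Then $\delta_i>0$ for $i<i_0$ and $\delta_i=0$ for $i_0\leq i\leq q$, and palindromicity mirrors this pattern on the second half.

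The only real care point is bookkeeping rather than mathematics. One must confirm that the degree bound $r$ used in Theorem~\ref{thm-main} equals $n-1$ exactly, so that the trapezoidal window and midpoint $q$ match. This is what \eqref{eq-cographic-corank} provides. Everything else is a direct citation.
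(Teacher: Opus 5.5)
Your proposal is correct and matches the paper's proof: the loopless one-vertex case gives $P_D(t)=1$, and otherwise Proposition~\ref{prop-flat-cographic} realizes $P_D(t)=f_{B_{D,T_0}}(t)$ for a full-dimensional flat arrangement of corank $n-1$ by \eqref{eq-cographic-corank}, so Theorem~\ref{thm-main} applies directly. One small point you could make explicit is that for $n\geq 2$ the rank $\beta=m-n+1$ is positive, since every vertex has out-degree at least one and so $m\geq n$, which the standing assumption $d\geq 1$ requires.
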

\begin{proof}
  The loopless one-vertex digraph has $P_D(t)=1$. Otherwise, Proposition \ref{prop-flat-cographic} gives a positive-rank full-dimensional flat arrangement $B_{D,T_0}$ with $P_D(t)=f_{B_{D,T_0}}(t)$. 
  Theorem \ref{thm-main} then applies with the coefficient sequence indexed through the corank. 
  Equation \eqref{eq-cographic-corank} identifies this corank with $n-1$, so the resulting coefficient statement is exactly \eqref{eq-eulerian-trapezoidality} and the asserted propagation of equality.
\end{proof}

\begin{remark}
  Corollary \ref{cor-eulerian-digraph-trapezoidality} extends the trapezoidality question of coefficients posed by Murasugi and Stoimenow from the planar alternating setting to every connected Eulerian digraph. Hafner, Mészáros, and Vidinas conjecture the stronger log-concavity inequalities
  \begin{equation*}
    c_k(D)^2\geq c_{k-1}(D)c_{k+1}(D)\quad\text{  for} \quad 1\leq k\leq n-2,
  \end{equation*}
  together with the absence of internal zeros \cite[Conjecture~1.6]{HMVGeneralized}. The latter condition already follows from strict positivity; the present argument establishes trapezoidality but not the nonlinear inequalities above.
\end{remark}

\printbibliography

\end{document}